\title[Profiniteness of Volume]
 {Profiniteness of higher rank volume}
 \author[H. Kammeyer]{Holger Kammeyer}
 \author[S. Kionke]{Steffen Kionke}
 \author[R. K\"ohl]{Ralf K\"ohl}
  \address{Heinrich Heine University D{\"u}sseldorf, Faculty of Mathematics and Natural Sciences, Mathematical Institute, Germany}
 \email{holger.kammeyer@hhu.de}
 \address{FernUniversit\"at in Hagen, Faculty of Mathematics and Computer Science, Germany}
 \email{steffen.kionke@fernuni-hagen.de}
 \address{Kiel University, Faculty of Mathematics and Natural Sciences, Department of Mathematics, Germany}
 \email{koehl@math.uni-kiel.de}
\subjclass[2010]{22E40, 20E18}
\keywords{profinite rigidity, volume}
\theoremstyle{plain}
\newtheorem{theorem}{Theorem}
\newtheorem{lemma}[theorem]{Lemma}
\newtheorem{corollary}[theorem]{Corollary}
\newtheorem{proposition}[theorem]{Proposition}
\newtheorem{conjecture}[theorem]{Conjecture}
\newtheorem*{conjectureU}{Conjecture (U)}
\theoremstyle{definition}
\newtheorem{definition}[theorem]{Definition}
\newtheorem{remark}[theorem]{Remark}
\newtheorem{example}[theorem]{Example}
\newtheorem{question}[theorem]{Question}
\numberwithin{equation}{section}
\numberwithin{theorem}{section}
\DeclareMathOperator{\Hom}{Hom}
\DeclareMathOperator{\rank}{rk}
\providecommand{\normal}{\trianglelefteq}
\providecommand{\fg}{\mathfrak{g}}
\providecommand{\bbR}{\mathbb{R}}
\providecommand{\bbQ}{\mathbb{Q}}
\providecommand{\bbZ}{\mathbb{Z}}
\providecommand{\bbA}{\mathbb{A}}
\providecommand{\bbC}{\mathbb{C}}
\DeclareMathOperator{\vol}{vol}
\renewcommand{\phi}{\varphi}
\providecommand{\ignore}[1]{}
\providecommand{\alg}[1]{\mathbf{#1}}
\providecommand{\R}{\mathbb{R}}
\providecommand{\Q}{\mathbb{Q}}
\providecommand{\Z}{\mathbb{Z}}
\providecommand{\C}{\mathbb{C}}
\newcounter{commentcounter}
\newcommand{\showcomments}{yes}
\newsavebox{\commentbox}
\newenvironment{comnz}%
{\ifthenelse{\equal{\showcomments}{yes}}%
{\footnotemark
        \begin{lrbox}{\commentbox}
        \begin{minipage}[t]{1.25in}\raggedright\sffamily\tiny
        \footnotemark[\arabic{footnote}]}
{\begin{lrbox}{\commentbox}}}
{\ifthenelse{\equal{\showcomments}{yes}}
{\end{minipage}\end{lrbox}\marginpar{\usebox{\commentbox}}}
{\end{lrbox}}}
\begin{document}
\selectlanguage{english}

\begin{abstract}
  We show that the covolume of an irreducible lattice in a higher rank semisimple Lie group with the congruence subgroup property is determined by the profinite completion.  Without relying on CSP, we additionally show that volume is a profinite invariant of octonionic hyperbolic congruence manifolds.
\end{abstract}

\maketitle

\section{Introduction}

A group \(\Gamma\) is called \emph{residually finite} if every \(1 \neq g \in \Gamma\) maps to a nontrivial element in some finite quotient of \(\Gamma\).  It then becomes a natural question in how far \(\Gamma\), or at least some property of \(\Gamma\), is determined by all finite quotient groups of \(\Gamma\); or equivalently, by the profinite completion \(\widehat{\Gamma}\).  A well-known problem of this sort has been around for quite some time and was in particular advertised as the final problem in A.\,Reid's 2018 ICM address~\cite{Reid:ICM}*{Question~7.4}.

\begin{question} \label{question:volume-3-manifolds}
Let \(M\) and \(N\) be finite volume hyperbolic 3-manifolds.  Suppose that \(\widehat{\pi_1 M} \cong \widehat{\pi_1 N}\).  Does \(\operatorname{vol} M = \operatorname{vol} N\)?
\end{question}

This question is in fact the case \(G = \operatorname{SL}_2 (\C)\) of the more general question if profinitely isomorphic irreducible lattices \(\Gamma, \Delta \le G\) in a semisimple Lie group \(G\) have fundamental domains of equal Haar measure.  We answer the general question affirmatively if \(G\) has \emph{higher rank} (at least two) and possesses the \emph{congruence subgroup property}.

\begin{theorem} \label{thm:same-lie-group}
  Let \(G\) be a connected semisimple Lie group with higher rank and finite center and without compact factors.  Fix a Haar measure \(\mu\) on \(G\) and suppose \(\Gamma, \Delta \le G\) are irreducible lattices with CSP* such that \(\widehat{\Gamma} \cong \widehat{\Delta}\).  Then \(\mu(G/\Gamma) = \mu(G/\Delta)\).
\end{theorem}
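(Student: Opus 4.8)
The plan is to realise the covolume through \emph{Prasad's volume formula} and to verify, term by term, that every quantity entering it is an invariant of the profinite completion; Margulis arithmeticity supplies the algebraic input, and CSP* is what lets one pass from \(\widehat\Gamma\) to the adelic congruence data. \emph{Reductions:} a profinite isomorphism \(\widehat\Gamma\cong\widehat\Delta\) induces an index-preserving bijection between the finite-index subgroups of \(\Gamma\) and those of \(\Delta\), and covolume scales by the index, so it suffices to prove the equality after replacing \(\Gamma,\Delta\) by corresponding finite-index subgroups. Using CSP* one arranges in this way that \(\Gamma\) and \(\Delta\) are torsion-free principal arithmetic subgroups — the congruence structure is detectable inside \(\widehat\Gamma\), hence transported by the isomorphism — and one absorbs the resulting index and normalizer corrections, which are themselves profinite invariants. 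One may further assume the ambient \(k\)-group is simply connected, at the cost of an isogeny correction (a rational factor built from the center of the universal cover and the relevant class numbers, again a profinite invariant). Margulis arithmeticity then provides a number field \(k\) and a connected, simply connected, absolutely almost simple \(k\)-group \(\mathbf G\) with \(G\cong\prod_{v\mid\infty}\mathbf G(k_v)\) modulo the fixed center and compact factors, such that \(\Gamma\) is the principal arithmetic subgroup of \(\mathbf G(k)\) attached to a coherent family of parahoric subgroups \((P_v)_{v\nmid\infty}\) with \(P_v\) hyperspecial for almost all \(v\); likewise \(\Delta\) with data \((k',\mathbf G',(P'_w)_w)\).

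Next I would pass from the profinite completion to local data. Since \(\mathbf G\) is simply connected and \(G\) has no compact factors, strong approximation applies, and CSP* identifies \(\widehat\Gamma\), up to a finite central subgroup, with the closure of \(\Gamma\) in \(\mathbf G(\mathbb{A}_{k,f})\), namely the compact open subgroup \(\prod_{v\nmid\infty}P_v\); similarly for \(\widehat\Delta\). The hypothesis then gives, after discarding finite central subgroups, a topological isomorphism \(\prod_v P_v\cong\prod_w P'_w\), and the essential point is that it must respect the local structure: the local factors are recognisable group-theoretically — for instance through their pro-\(p\) parts, or through their action on the Bruhat--Tits building — which yields a bijection \(v\leftrightarrow w\) of the finite places together with isomorphisms \(P_v\cong P'_w\) that upgrade to isomorphisms of local fields \(k_v\cong k'_w\) and of groups \(\mathbf G_{k_v}\cong\mathbf G'_{k'_w}\) under which \(P_v\) and \(P'_w\) correspond. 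Consequently \(k\) and \(k'\) have isomorphic completions at every place, hence are arithmetically equivalent — so share degree, discriminant, Dedekind zeta function and normal closure; \(\mathbf G\) and \(\mathbf G'\) are everywhere locally isomorphic, hence share the absolute root datum, the exponents \(m_1,\dots,m_r\), and locally matching \(*\)-actions, so the associated splitting fields \(\ell,\ell'\) are arithmetically equivalent over \(k,k'\) in a compatible way; and the parahoric families \((P_v)\) and \((P'_w)\) correspond under \(v\leftrightarrow w\).

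Finally I would invoke Prasad's volume formula, which writes \(\mu(G/\Gamma)\) as a product of: (i) an archimedean factor \(\bigl(\prod_{i=1}^{r}\tfrac{m_i!}{(2\pi)^{m_i+1}}\bigr)^{[k:\mathbb{Q}]}\), together with the constant relating \(\mu\) to Prasad's Tamagawa normalization, depending only on \((G,\mu)\); (ii) the discriminant factors \(D_k^{(\dim\mathbf G)/2}\) and \(\bigl(D_\ell/D_k^{[\ell:k]}\bigr)^{e}\), where the exponent \(e\) depends only on the absolute type and the \(*\)-action; (iii) a product of special values \(\zeta_k(1-m_i)\) and of Artin \(L\)-values of the \(*\)-action representation; (iv) the Tamagawa number \(\tau_k(\mathbf G)\), which equals \(1\) because \(\mathbf G\) is simply connected; and (v) a finite product of local factors \(e(P_v)\), trivial at every hyperspecial place and otherwise depending only on the local isomorphism type of \((\mathbf G_{k_v},P_v)\). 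By the previous paragraph each of (i)--(v) takes the same value for \(\Delta\): (i) since \((G,\mu)\) is fixed; (ii) and (iii) since arithmetically equivalent fields share discriminants and Dedekind zeta functions and the compatible \(*\)-actions give arithmetically equivalent \(\ell\) with equal discriminant and equal \(L\)-values; (iv) trivially; and (v) since the parahoric families correspond and \(e(P_v)=e(P'_w)\). Hence \(\mu(G/\Gamma)=\mu(G/\Delta)\), and reversing the reductions proves the theorem.

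\textbf{Main obstacle.} The heart of the argument is the middle step: extracting the full adelic dictionary from a bare topological isomorphism of profinite completions — that the local \(p\)-adic factors get matched up and that each one remembers its local field and its algebraic group. This rests on CSP* (to make \(\widehat\Gamma\) adelic in the first place), on a group-theoretic characterisation of the decomposition \(\widehat\Gamma\sim\prod_v P_v\) and of its individual factors, and on reconstruction results for \(p\)-adic reductive groups from the profinite structure of their parahoric subgroups. A secondary, more technical, difficulty is the bookkeeping of the reductions: ensuring that the passage to principal arithmetic subgroups, the reduction to simply connected groups, and the index, normalizer and class-number corrections to Prasad's formula are all transparently profinite invariants, and that relating Prasad's Tamagawa normalization to the fixed Haar measure \(\mu\) is harmless — which it is, since only the ratio \(\mu(G/\Gamma)/\mu(G/\Delta)\) is at stake.
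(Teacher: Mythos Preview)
Your approach differs substantially from the paper's, and the place where it diverges is exactly the step you flag as the main obstacle.

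You propose to pass from \(\widehat\Gamma\cong\widehat\Delta\) to a matching of local data by arguing that an abstract topological isomorphism \(\prod_v P_v\cong\prod_w P'_w\) of compact open subgroups must respect the product decomposition and, further, that each factor \(P_v\) remembers its local field \(k_v\), its ambient group \(\mathbf G_{k_v}\), and its parahoric type. You do not supply an argument for this beyond gesturing at pro-\(p\) parts and Bruhat--Tits buildings. Separating the primes is easy, but for a fixed prime \(p\) there may be several places of \(k\) and of \(l\) above \(p\), and even once the factors are matched, recovering the local field and the parahoric type from the bare profinite group \(P_v\) is a nontrivial reconstruction problem. Results in this direction exist (e.g.\ Pink's work on compact subgroups of linear algebraic groups), but you neither cite nor invoke them, and it is not clear they deliver exactly what each term of Prasad's formula requires. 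As written, this step is a genuine gap.

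The paper avoids this problem entirely: it does \emph{not} try to read the local dictionary out of the profinite group structure. Instead it applies an adelic form of Margulis superrigidity (Theorem~\ref{thm:adelic-rigidity-s-arithmetic}) to the composite \(\Gamma\hookrightarrow\widehat\Gamma\cong\widehat\Delta\to\mathbf H(\mathbb A_{l,T})\). Superrigidity then produces, directly, a ring isomorphism \(\mathbb A_{l,T}\cong\mathbb A_{k,S}\) and a group-scheme isomorphism \(\mathbf G\times_k\mathbb A_{k,S}\cong\mathbf H\times_l\mathbb A_{l,T}\) over it, carrying the image of a suitable finite-index subgroup to the corresponding one on the other side. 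This is the key idea you are missing: the local matching is a \emph{consequence} of superrigidity, not of profinite group theory.

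With that in hand the paper's volume computation is also simpler than yours. Rather than Prasad's formula, it uses Tamagawa number one and strong approximation to write the Killing covolume of the congruence hull \(\Gamma_0^c\) as \(|d_k|^{\dim\mathbf G/2}\) divided by the Killing volume of the compact open image \(\widetilde U\subset\mathbf G(\mathbb A_{k,S})\); the adelic group-scheme isomorphism carries \(\widetilde U\) to its counterpart on the \(\mathbf H\)-side and preserves Killing measure (being an isomorphism of Lie algebras, hence of Killing forms), while arithmetic equivalence of \(k\) and \(l\) equates the discriminants. CSP* then enters only to control the index \([\Gamma_0^c:\Gamma_0]=|C(\mathbf G,S)|=|M(\mathbf G,S)|\) via the metaplectic kernel computation (Theorem~\ref{thm:metaplectic}); the renormalization factor \(\delta\) is designed precisely to absorb the possible discrepancy between these orders on the two sides. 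Your proposed use of CSP* --- to ``detect the congruence structure inside \(\widehat\Gamma\)'' and transport it across \(\Psi\) --- is also more than CSP* obviously delivers; the paper instead simply passes to an open subgroup of \(\widehat\Gamma\) meeting both congruence kernels trivially.
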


Conjecturally, the assumption of CSP* is always satisfied.  Nevertheless, we highlight the following unconditional conclusion for lattices with non-compact quotient.

\begin{theorem} \label{thm:non-uniform-same-lie-group}
    Let \(G\) be a connected semisimple Lie group with higher rank and finite center and without compact factors.  Fix a Haar measure \(\mu\) on \(G\) and let \(\Gamma, \Delta \le G\) be irreducible non-uniform lattices such that \(\widehat{\Gamma} \cong \widehat{\Delta}\).  Then \(\mu(G/\Gamma) = \mu(G/\Delta)\).
\end{theorem}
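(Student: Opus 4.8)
The plan is to deduce Theorem~\ref{thm:non-uniform-same-lie-group} from Theorem~\ref{thm:same-lie-group}: it suffices to show that an irreducible non-uniform lattice in a higher rank connected semisimple Lie group with finite center and without compact factors automatically has CSP*. So let \(\Gamma \le G\) be such a lattice. Because \(G\) has higher rank, Margulis arithmeticity applies: \(\Gamma\) is arithmetic, arising from a number field \(k\) and an absolutely almost simple simply connected \(k\)-group \(\alg{G}\), so that \(G\) is, up to compact factors and a central isogeny, the product \(\prod_{v \mid \infty} \alg{G}(k_v)\), and \(\Gamma\) is commensurable to the image of \(\alg{G}(\calO_k)\).

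Next I would translate the two hypotheses into statements about \(\alg{G}\). By the Godement compactness criterion, non-uniformity of \(\Gamma\) is equivalent to \(\alg{G}\) being isotropic over \(k\), i.e.\ \(\rank_k \alg{G} \ge 1\); and the higher rank assumption on \(G\) says exactly that \(\sum_{v \mid \infty} \rank_{k_v} \alg{G} \ge 2\). These are precisely the hypotheses under which the congruence subgroup problem for \(\alg{G}\) has an affirmative solution with finite, and in fact central, congruence kernel. This is the content of Serre's conjecture in the isotropic higher rank case, proved through the combined work of Bass--Milnor--Serre and Matsumoto for split and quasi-split groups, Serre in the \(\SL_2\) case, and Raghunathan in general, with the congruence (metaplectic) kernel subsequently pinned down by Prasad--Rapinchuk. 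Hence \(\Gamma\) has CSP*, and the same argument applies to \(\Delta\).

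With CSP* established for both \(\Gamma\) and \(\Delta\), Theorem~\ref{thm:same-lie-group} applies and yields \(\mu(G/\Gamma) = \mu(G/\Delta)\).

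The one point requiring care is bookkeeping rather than a new idea: one must match the precise formulation of CSP* used in the proof of Theorem~\ref{thm:same-lie-group} with the congruence kernel statements available in the literature, and track the passage from \(G\) to the \(k\)-group \(\alg{G}\) and its simply connected cover, so that the isogeny and the distinction between arithmetic and \(S\)-arithmetic groups do not interfere with finiteness of the congruence kernel. All of the mathematical weight sits in Theorem~\ref{thm:same-lie-group} and in the classical solution of the congruence subgroup problem for isotropic groups of higher rank.
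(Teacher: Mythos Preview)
Your proposal is correct and follows the same route as the paper: reduce to Theorem~\ref{thm:same-lie-group} by observing that non-uniformity forces the underlying $k$-group $\mathbf{G}$ to be $k$-isotropic (Godement/Harish-Chandra compactness criterion), and then invoke the known affirmative solution of CSP* in the isotropic higher-rank case. The paper packages this last step as Theorem~\ref{thm:status-of-csp}, whereas you spell out the chain of references (Bass--Milnor--Serre, Matsumoto, Raghunathan, Prasad--Rapinchuk); substantively the arguments coincide.
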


We will exlain the precise meaning of CSP* in Section~\ref{sec:csp}.  At this point, let us only inform the experts that essentially, it shall refer to a finite \emph{congruence kernel} with the two additional requirements that ``Conjecture U'' should hold true if the algebraic group in which $\Gamma$ is arithmetic is a certain outer form of type $A_n$ and moreover that the congruence kernel has the same order as the \emph{metaplectic kernel}.  The well-known \emph{Margulis--Platonov conjecture} asserts that the latter condition should actually be automatic and this is in fact known in the majority of cases.  Even better, another well-known conjecture due to Serre says that CSP should hold under our assumptions and the status of this conjecture is likewise advanced.  So in many cases, requiring CSP* is not needed and conjecturally, it is redundant altogether.  A notable case in which CSP* is known occurs if the algebraic group is isotropic.  By the Borel--Harish-Chandra Theorem, this translates back to Theorem~\ref{thm:non-uniform-same-lie-group}.

\medskip
While our methods break down with regard to the original Question~\ref{question:volume-3-manifolds}, we do offer a result for another type of rank one locally symmetric spaces, for which the congruence subgroup property is still unknown.  Recall that there exists an exceptional rank one symmetric space called the \emph{octonionic hyperbolic plane} \(\mathbb{OH}^2\).  We will refer to any 16-dimensional connected Riemannian manifold whose universal covering is isometric to \(\mathbb{OH}^2\) as an \emph{octonionic hyperbolic manifold}.  Note that the isometry group of \(\mathbb{OH}^2\) is the exceptional real Lie group \(F_{4(-20)}\) so that the fundamental group of any octonionic hyperbolic manifold embeds as a subgroup of \(F_{4(-20)}\) by the deck transformation action.

\begin{theorem} \label{thm:octonionic}
  Let \(M\) and \(N\) be finite volume octonionic hyperbolic manifolds with fundamental groups \(\Gamma = \pi_1 M\) and \(\Delta = \pi_1 N\).  Assume that \(\Gamma\) and \(\Delta\) are arithmetic congruence lattices in \(F_{4(-20)}\) and \(\widehat{\Gamma} \cong \widehat{\Delta}\).  Then \(\vol M = \vol N\).
\end{theorem}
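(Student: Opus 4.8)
The plan is to recover from \(\widehat{\Gamma}\) precisely the arithmetic quantities that enter Prasad's volume formula, using the superrigidity available for \(F_{4(-20)}\)-lattices in place of the congruence subgroup property, and then to run the same volume comparison that drives Theorem~\ref{thm:same-lie-group}. Since \(\Gamma\) and \(\Delta\) are lattices in \(F_{4(-20)}\), Corlette's archimedean superrigidity together with Gromov--Schoen's \(p\)-adic superrigidity shows, via Margulis arithmeticity, that they are arithmetic: there are a totally real number field \(k\) and an absolutely almost simple, simply connected \(k\)-group \(\mathbf{G}\) of type \(F_4\), anisotropic (equivalently compact) at all archimedean places of \(k\) save one, at which it is \(F_{4(-20)}\), with \(\Gamma\) commensurable to \(\mathbf{G}(\mathcal{O}_k)\); likewise \(\Delta\) with data \((\ell,\mathbf{H})\). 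The argument does not distinguish the cocompact and non-cocompact cases. A decisive simplification over the setting of Theorem~\ref{thm:same-lie-group} is that \(F_4\) is simultaneously simply connected and adjoint with trivial outer automorphism group, so the outer-form and metaplectic-kernel phenomena behind the hypothesis CSP* cannot occur; moreover, by the Hasse principle for simply connected groups and the vanishing of \(H^1\) over non-archimedean completions, \(\mathbf{G}\) is determined by \(k\) and the choice of a distinguished real place, a choice that does not affect the covolume. Hence Prasad's formula expresses \(\vol M\), up to a universal normalizing constant, as an explicit function of the Dedekind zeta function \(\zeta_k\) — through \(D_k\) and the values \(\zeta_k(2),\zeta_k(6),\zeta_k(8),\zeta_k(12)\) — together with the local parahoric types of the congruence lattice \(\Gamma\); it therefore suffices to recover this data from \(\widehat{\Gamma}\) and to match it with the corresponding data for \(\Delta\).

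Because \(\Gamma\) is a congruence lattice, the congruence completion \(\overline{\Gamma}\), the closure of \(\Gamma\) in \(\mathbf{G}(\mathbb{A}_{k,f})\), is a continuous quotient of \(\widehat{\Gamma}\); by strong approximation (valid since \(\mathbf{G}\) is simply connected and \(F_{4(-20)}\) is non-compact) it is an open compact subgroup \(\prod_{\mathfrak p}K_{\mathfrak p}\), with \(K_{\mathfrak p}\) parahoric and hyperspecial, equal to \(\mathbf{G}(\mathcal{O}_{\mathfrak p})\), for all but finitely many \(\mathfrak p\), recalling that \(F_4\) is split over every non-archimedean completion of \(k\). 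Thus for every hyperspecial place \(\mathfrak p\) the finite simple group \(F_4(\mathbb{F}_{N\mathfrak p})\) is a quotient of \(\widehat{\Gamma}\), and at the remaining finitely many \(\mathfrak p\) the finite group \(\mathbf{M}_{\mathfrak p}(\mathbb{F}_{N\mathfrak p})\) attached to the reductive quotient \(\mathbf{M}_{\mathfrak p}\) of \(K_{\mathfrak p}\) — whose semisimple type is a proper subdiagram of the affine Dynkin diagram of \(F_4\) — is a quotient as well. All of the congruence data therefore sits inside one distinguished quotient of \(\widehat{\Gamma}\), and the point is to characterize it in a way that is manifestly invariant under profinite isomorphism.

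This is the heart of the matter and, I expect, essentially the whole difficulty. The claim to establish is that every sufficiently large finite simple quotient of \(\Gamma\) of Lie type \(F_4\) — concretely, every quotient isomorphic to \(F_4(\mathbb{F}_q)\) with \(q\) above an explicit bound depending only on the rank — is a congruence quotient, so that the set of such \(q\) occurring for \(\widehat{\Gamma}\) coincides, up to finitely many terms, with \(\{\,N\mathfrak p : \mathfrak p\text{ a hyperspecial place of }k\,\}\); the remaining places, together with their parahoric types, are recovered by detecting quotients of the shape \(\mathbf{M}(\mathbb{F}_q)\) with \(\mathbf{M}\) reductive of type a proper subdiagram of the \(F_4\)-diagram, and the multiplicities — hence the full multiset \(\{N\mathfrak p\}\) — by detecting the largest \(n\) for which a fixed \(X(\mathbb{F}_q)^{n}\) is a quotient. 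Reconstructing the Euler product \(\prod_{\mathfrak p}(1-N\mathfrak p^{-s})^{-1}\) from this multiset recovers \(\zeta_k\), hence \(D_k\) and the values \(\zeta_k(2i)\) in Prasad's formula, and the parahoric types are exactly the input for the remaining local correction factors. The obstacle is that one cannot here invoke the congruence subgroup property — it is open for \(F_{4(-20)}\)-lattices and the congruence kernel is a priori uncontrolled — so the recognition of congruence quotients must be extracted from Gromov--Schoen superrigidity directly, as a rigidity statement for homomorphisms of \(\Gamma\) onto finite groups of Lie type inside the ambient algebraic group, rather than deduced from an isomorphism of congruence completions as in the higher-rank case. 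It is the same superrigidity input that furnishes arithmeticity, but here it has to do considerably more delicate work.

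Granting the recognition step, one matches the recovered data for \(\Gamma\) and \(\Delta\): the multisets \(\{N\mathfrak p\}\) coincide, so \(\zeta_k=\zeta_\ell\) and in particular \(D_k=D_\ell\), and the parahoric types agree place by place. Feeding this into Prasad's formula gives \(\mu(F_{4(-20)}/\Gamma)=\mu(F_{4(-20)}/\Delta)\) for the Haar measure normalized as in that formula — the passage from \(\mathbf{G}(\mathcal{O}_k)\) to the actual lattice \(\Gamma=\mathbf{G}(k)\cap\overline{\Gamma}\) contributes only indices read off from the \(K_{\mathfrak p}\), which have likewise been matched. Since every octonionic hyperbolic manifold carries the Riemannian metric induced from the fixed metric on \(\mathbb{OH}^2\), Riemannian volume and the normalized Haar measure differ by one and the same universal constant for \(M\) and for \(N\), so \(\vol M=\vol N\). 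The engine of this last step is exactly the Prasad-formula comparison that underlies Theorem~\ref{thm:same-lie-group}, now with the hypothesis CSP* dropped because the centre and the outer automorphism group of \(F_4\) are trivial.
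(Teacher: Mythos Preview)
Your proposal leaves the decisive step --- that every sufficiently large finite simple quotient \(F_4(\mathbb{F}_q)\) of \(\Gamma\) is a congruence quotient --- as a claim to be established, and the tool you name, Gromov--Schoen superrigidity, does not obviously deliver it. That theorem (like Corlette's and like the algebraic superrigidity used in this paper) governs homomorphisms of \(\Gamma\) into almost simple groups over fields of characteristic zero with Zariski-dense image; a surjection \(\Gamma\twoheadrightarrow F_4(\mathbb{F}_q)\) is not of this shape, and there is no evident lifting to characteristic zero. For targets that are bounded in a non-archimedean group, superrigidity in the building-theoretic form says nothing beyond the existence of a fixed point. Since CSP is open for \(F_{4(-20)}\), the congruence kernel is a priori an arbitrary profinite group and could in principle manufacture extra simple quotients of type \(F_4\); your parenthetical that superrigidity ``has to do considerably more delicate work'' is precisely the admission that this step is unproved.

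The paper sidesteps the recognition problem entirely and never invokes Prasad's formula. Instead of trying to single out the congruence completion inside \(\widehat{\Gamma}\), it feeds the composite
\(\Gamma\hookrightarrow\widehat{\Gamma}\xrightarrow{\ \Psi\ }\widehat{\Delta}\twoheadrightarrow\overline{\Delta}\subset\mathbf{H}(\mathbb{A}_{l,T})\)
into the \emph{adelic} superrigidity theorem (Theorem~\ref{thm:adelic-rigidity-s-arithmetic}, the \(S\)-arithmetic extension of \cite{KammeyerKionke}*{Theorem~3.2}): algebraic superrigidity of \(\mathbf{G}\), supplied by Corlette and Gromov--Schoen, forces this map to come from a group-scheme isomorphism \(\eta_1\colon\mathbf{G}\times_k\mathbb{A}_{k,S}\to\mathbf{H}\times_l\mathbb{A}_{l,T}\) over an isomorphism of adele rings, up to a twist \(\nu\) into the centre. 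Because \(F_4\) has trivial centre, \(\nu\) is forced to vanish and one obtains a commuting square identifying the congruence images directly (Corollary~\ref{cor:S-arithmetic-volume-H1}, hence Corollary~\ref{cor:F4}). The volume comparison is then just Tamagawa number equal to \(1\), the fact that \(\eta_1\) carries Killing measure to Killing measure, and the equality of discriminants for arithmetically equivalent fields. Finally, the hypothesis that \(\Gamma\) and \(\Delta\) are congruence lattices gives \(\Gamma=\Gamma^c\) and \(\Delta=\Delta^c\), so the equality of covolumes of the congruence hulls is the desired conclusion.
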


By the work of Corlette~\cite{Corlette:archimedean-superrigidity} and Gromov--Schoen~\cite{GromovSchoen}, arithmeticity for lattices in $F_{4(-20)}$ is known.  Contrary to Serre's original conjecture mentioned above, several results have meanwhile pointed in the direction that CSP might also hold in type $F_{4(-20)}$.  So if that is true, our result gives profiniteness of octonionic hyperbolic volume in general.  Moreover, in that case one can construct non-isomorphic, profinitely isomorphic lattices as in the theorem using non-isomorphic number fields with isomorphic adele rings.  This contrasts with Question~\ref{question:volume-3-manifolds}:  Conjecturally, all Kleinian groups are profinitely rigid.

\medskip
Finally, it has long been known that even lattices \(\Gamma \le G\) and \(\Delta \le H\) in different Lie groups can have isomorphic profinite completions (e.g.~\cite{Aka:arithmetic}). One may still ask if they should have equal covolume.  This question, however, depends on the normalization of the Haar measures on \(G\) and \(H\).  Two more or less canonical such normalizations come to mind.  One can use the \emph{Killing form} to obtain such a normalization as we will explain in Section~\ref{sec:measures}.  Another option is to use the \emph{Euler-Poincar{\'e}-measure} \cite{Serre:cohomologie}*{Section~1.6} which is the Haar measure \(\mu\) on \(G\) such that \(\mu(G/\Gamma) = \chi (\Gamma)\) is the Euler characteristic for every torsion-free cocompact lattice \(\Gamma \le G\), provided \(\chi(\Gamma) \neq 0\).

However, neither normalization turns volume into a profinite invariant for lattices in higher rank Lie groups.  While it will be apparent from this investigation that the Killing form normalization does not work, it was shown before in \cite{Kammeyer-Kionke-Raimbault-Sauer}*{Theorem~1.2} that there exist profinitely isomorphic spinor groups with distinct Euler characteristics.

So the only thing one can still hope for is that on each higher rank Lie group, one can fix one particular normalization of the Haar measure such that volume becomes a profinite invariant among all lattices in all such Lie groups.  This is indeed what will be accomplished in this article, even in the more general setting where a ``semisimple Lie group'' is understood as a product of simple Lie groups over various local fields of characteristic zero.  To formulate these most general results precisely, we shall now delve a bit deeper into the theory.

\begin{definition} \label{def:simply-connected}
Let $A$ be a finite set. For each $\alpha \in A$ let $k_\alpha$ be a local field with $\mathrm{char}(k_\alpha)=0$ and let $\mathbf{G}_\alpha$ be a simply-connected absolutely almost simple linear algebraic $k_\alpha$-group.  We call a locally compact topological group $G$ of the form
\[
	G = \prod_{\alpha\in A} \mathbf{G}_\alpha(k_\alpha)
\]
an \emph{algebraically simply-connected semisimple Lie group}.
The \emph{rank} of $G$ is defined as
\[ \rank G = \sum_{\alpha \in A} \rank_{k_\alpha} \mathbf{G}_\alpha. \]
We say that $G$ \emph{has no compact factors} if each $\mathbf{G}_\alpha$ is $k_\alpha$-isotropic or, equivalently, if none of the groups $\mathbf{G}_\alpha(k_\alpha)$ is compact.
\end{definition}

The point of this definition is that an irreducible lattice $\Gamma \le G$ in an algebraically simply-connected semisimple Lie group with $\rank G \ge 2$ and without compact factors is \emph{$S$-arithmetic} by \emph{Margulis arithmeticity} \cite{Margulis:discrete-subgroups}*{Theorem~IX.1.11, p.\,298}.  This means that there exists a number field $k$, a connected absolutely almost simple $k$-group $\mathbf{H}$, and a finite subset $S \subset V(k)$ of the set of all places of $k$ such that $S$ contains all infinite places and finally, there exists a continuous homomorphism of topological groups
\[ \varphi \colon \prod_{v \in S^{\text{is}}} \mathbf{H}({k_v}) \longrightarrow G \]
such that $\varphi(\mathbf{H}(\mathcal{O}_{k, S}))$ is commensurable with $\Gamma$.  Here, $S^{\text{is}}$ denotes the subset of $S$ with all infinite places removed at which $\mathbf{H}$ is anisotropic and $\mathcal{O}_{k, S}$ denotes the ring of $S$-integers in $k$, meaning the subring of $k$ consisting of all $x \in k$ with $|x|_v \le 1$ for all finite places $v \notin S$.  The group $\mathbf{H}(\mathcal{O}_{k, S})$ is defined by picking an embedding $\mathbf{H} \subset \mathbf{GL_n}$ and is thus well-defined up to commensurability.  By~\cite{Margulis:discrete-subgroups}*{Remark~IX.1.6.(i), p.\,294}, we may and will moreover assume that $\mathbf{H}$ is simply-connected.  Moreover, our assumption that the groups $\mathbf{G}_\alpha$ are all absolutely simple and simply-connected guarantees that $\varphi$ is in fact an isomorphism of topological groups given by a product of isomorphisms of the factors~\cite{Margulis:discrete-subgroups}*{Remarks~(i) and~(iii) on p.\,291}.  In fact, we have a bijection $\alpha \colon S^{\text{is}} \rightarrow A$ such that $\varphi = \prod_{v \in S^{\text{is}}} \varphi_v$ for isomorphisms $\varphi_v \colon \mathbf{H}_v \rightarrow \mathbf{G}_{\alpha(v)}$ defined over isomorphism $k_v \cong k_{\alpha(v)}$.

\medskip
Finally, it follows from superrigidity~\cite{Margulis:discrete-subgroups}*{Theorem~C, Chapter~VIII, p.\,259} that $k$, $\mathbf{H}$, and $S$ are unique in the strongest sense:  For $k'$, $\mathbf{H}'$, and $S'$ with the above properties, there exists a field isomorphism $\sigma \colon k \rightarrow k'$ inducing a bijection from $S$ to $S'$ and there exists a $k$-isomorphism $\mathbf{H} \cong {}^\sigma \mathbf{H}'$.  In particular, it is meaningful to define that $\Gamma$ has the \emph{congruence subgroup property (CSP)} if the uniquely defined $k$-group $\mathbf{H}$ has finite $S$-congruence kernel $C(\mathbf{H}, S)$.  We remark that we could have equivalently and intrinsically defined that $\Gamma$ has CSP by requiring that it have polynomial representation growth~\cite{Lubotzky-Martin:rep-growth}, or, still equivalently, polynomial index growth~\cite{Lubotzky-Segal:subgroup-growth}*{Theorem~12.10, p.\,223}.  Similarly, we define that \(\Gamma\) has CSP* if \(\mathbf{H}\) has CSP* with respect to \(S\) according to Definition~\ref{definition:cspstar}.

\bigskip

Let $k_\alpha$ be a local field of characteristic $0$ and let $G$ be a semisimple Lie group over $k_\alpha$. The Killing form on the Lie algebra gives rise to a canonical Haar measure  $\mu^{\dagger}_G$ on $G$ (see Section~\ref{sec:measures}); we refer to this measure as the Killing measure of $G$.

\begin{definition}\label{def:renormalized-measure}
Let $G$ be an algebraically simply-connected semisimple Lie group.  We define the \emph{renormalized Killing measure} on $G$ as
\[
     \mu^\diamond_G = \delta \prod_{\alpha \in A} c_\alpha^{-1} \mu^\dagger_{\mathbf{G}_\alpha(k_\alpha)}
\]
where $c_\alpha$ is the Killing measure of the compact real form of $\mathbf{G}_\alpha$ if $\alpha$ is archimedean and otherwise $c_\alpha = 1$. Here $\delta = 2$ if all $k_\alpha$ are archimedean and there is some $\alpha \in A$ such that $k_\alpha=\bbR$ and $\mathbf{G}_\alpha(k_\alpha)$ is not topologically simply-connected. In all other cases, we define $\delta = 1$.
\end{definition}

\begin{theorem} \label{thm:main-lattices}
Let $G$ and $H$ be algebraically simply-connected semisimple Lie groups of rank at least two without compact factors.
Suppose $\Gamma \subseteq G$ and $\Delta \subseteq H$ are irreducible lattices with CSP* such that $\widehat{\Gamma} \cong \widehat{\Delta}$.  Then the renormalized Killing covolumes of $\Gamma$ and $\Delta$ are equal.
\end{theorem}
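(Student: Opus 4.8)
The plan is to reduce, via Margulis arithmeticity and superrigidity, to a statement about $S$-arithmetic groups, to express the renormalized Killing covolume by Prasad's volume formula, and then to read the hypothesis $\widehat{\Gamma}\cong\widehat{\Delta}$ off the congruence completion in order to match the local factors of that formula. As recalled before the theorem, $\Gamma$ is commensurable with the $S$-integral points of a uniquely determined simply-connected absolutely almost simple $k$-group $\mathbf{H}$, and similarly $\Delta$ comes from data $(k',\mathbf{H}',S')$. Using that $\tau(\mathbf{H})=1$ (the Weil conjecture on Tamagawa numbers, proved by Kottwitz and others) and strong approximation — where the relevant class number is $1$ because $\mathbf{H}$ is isotropic at some place of $S$, as $G$ has no compact factors — one writes
\[
  \mu^{\diamond}_{G}(G/\Gamma)=\kappa\cdot |C(\mathbf{H},S)|^{\pm 1}\cdot\prod_{v\notin S}\mu^{\dagger}_{\mathbf{H}(k_{v})}\bigl(\mathbf{H}(\mathcal{O}_{v})\bigr)^{-1},
\]
where the product runs over finite places outside $S$ and $\kappa$ is a constant collecting the archimedean terms and the local factors at the finite places of $S$. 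The role of the factors $c_{\alpha}$ and $\delta$ in Definition~\ref{def:renormalized-measure} is precisely to make the archimedean part of $\kappa$ depend only on the common absolute type and on $[k:\mathbb{Q}]$.

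By CSP, $\widehat{\Gamma}$ surjects with finite central kernel $C(\mathbf{H},S)$ onto the $S$-congruence completion, which equals the restricted product $\prod_{v\notin S}\mathbf{H}(\mathcal{O}_{v})$ over finite places. A reconstruction argument for such ``adelic'' profinite groups — recovering, up to finitely many exceptions, the $p$-primary parts and then the individual local factors $\mathbf{H}(\mathcal{O}_{v})$ — turns an isomorphism $\widehat{\Gamma}\cong\widehat{\Delta}$ into a bijection between almost all finite places $v$ of $k$ and $v'$ of $k'$ under which $\mathbf{H}(\mathcal{O}_{v})\cong\mathbf{H}'(\mathcal{O}_{v'})$. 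Such an isomorphism is automatically continuous and therefore induces an isomorphism of the associated $\mathbb{Q}_{p}$-analytic Lie algebras; since any Lie algebra isomorphism preserves the Killing form — and over $\mathbb{Q}_{p}$ the Killing form is the trace form of the $k_{v}$-Killing form, which records $\mathfrak{d}_{k_{v}/\mathbb{Q}_{p}}$, so that $k_{v}\cong k'_{v'}$ — it preserves the Killing measure, giving $\mu^{\dagger}_{\mathbf{H}(k_{v})}(\mathbf{H}(\mathcal{O}_{v}))=\mu^{\dagger}_{\mathbf{H}'(k'_{v'})}(\mathbf{H}'(\mathcal{O}_{v'}))$ for matched places. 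Consequently the absolute types of $\mathbf{H}$ and $\mathbf{H}'$ agree, $[k:\mathbb{Q}]=[k':\mathbb{Q}]$, and the products over almost all finite places, as well as the archimedean parts of $\kappa$, agree.

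The main obstacle is the finitely many places that remain unmatched — above all the finite places of $S$ (necessarily places at which $\mathbf{H}$ is isotropic) — together with the congruence-kernel factor. These contribute to $\mu^{\diamond}_{G}(G/\Gamma)$, through the local differents at the $S$-places, the discriminant of the quasi-split inner form's splitting field, and $|C(\mathbf{H},S)|$, yet the places of $S$ are invisible to $\widehat{\Gamma}$, since the congruence completion only involves places outside $S$. One must therefore show that the product of these invisible contributions is nonetheless forced by $\widehat{\Gamma}\cong\widehat{\Delta}$, and this is exactly what the hypotheses packaged into CSP* are for: the identity $|C(\mathbf{H},S)|=|M(\mathbf{H},S)|$ replaces the congruence kernel by the metaplectic kernel, which by Prasad--Rapinchuk is computable from the local data already recovered; Conjecture~U disposes of the remaining outer forms of type $A_{n}$; and the global product formula, together with the reconstruction of $\mathbf{H}$ away from $S$ via the Hasse principle, pins down the leftover discriminant factors at the $S$-places and the value of $\delta$. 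Assembling these identities gives $\mu^{\diamond}_{G}(G/\Gamma)=\mu^{\diamond}_{H}(H/\Delta)$.
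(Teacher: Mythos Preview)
Your strategy diverges from the paper's at the crucial step, and the divergence is where a gap opens. The paper does not decompose the congruence completion into local factors and match them place by place. Instead, after passing to equal-index torsion-free subgroups inside the integral points (and using Lemma~\ref{lem:forget-compact-factors}), it applies \emph{adelic superrigidity} (Theorem~\ref{thm:adelic-rigidity-s-arithmetic}) to the composite $\Gamma \hookrightarrow \widehat{\Gamma} \xrightarrow{\Psi} \widehat{\Delta} \to \mathbf{H}(\mathbb{A}_{l,T})$ and its inverse. This produces in one stroke a ring isomorphism $\mathbb{A}_{l,T}\cong\mathbb{A}_{k,S}$ and an isomorphism of group schemes $\eta_1\colon\mathbf{G}\times_k\mathbb{A}_{k,S}\to\mathbf{H}\times_l\mathbb{A}_{l,T}$ carrying the image $\widetilde U\subset\mathbf{G}(\mathbb{A}_{k,S})$ of a suitable open $U\trianglelefteq\widehat\Gamma$ to its counterpart on the $\mathbf{H}$-side (Proposition~\ref{prop:S-arithmetic-volume-base}). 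All places outside $S$ and $T$ are matched simultaneously, so your ``main obstacle'' of finitely many unmatched places never arises. The Killing covolume of the congruence hull $\Gamma_0^c$ is then computed globally rather than locally: from $\tau(\mathbf{G})=1$ and Lemma~\ref{lem:tamagawa-to-killing} one gets $|d_k|^{\dim\mathbf{G}/2}\,\mu^\dagger_{V(k)\setminus S}(\widetilde U)^{-1}$, and since $\eta_1$ preserves Killing measure while arithmetically equivalent fields have equal discriminants, the two congruence-hull covolumes coincide. The passage from congruence hull to the lattice itself---via $[\Gamma_0^c:\Gamma_0]=|C(\mathbf{G},S)|=|M(\mathbf{G},S)|$ after shrinking $U$---is then the only place where CSP* and the factor $\delta$ enter (Theorem~\ref{thm:S-arithmetic-main}).

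Your proposal, by contrast, rests on two steps that are not actually carried out. First, the ``reconstruction argument'' extracting a bijection of local factors $\mathbf{H}(\mathcal{O}_v)\cong\mathbf{H}'(\mathcal{O}_{v'})$ from an abstract isomorphism $\widehat\Gamma\cong\widehat\Delta$ is neither supplied nor cited; and note that this isomorphism need not carry the congruence kernel on one side to that on the other, so you do not even begin with a well-defined isomorphism of congruence completions to decompose. Second, your resolution of the leftover places---``the global product formula, together with the reconstruction of $\mathbf{H}$ away from $S$ via the Hasse principle, pins down the leftover discriminant factors at the $S$-places''---is not an argument: local data outside $S$ does not determine the local volumes at finite places inside $S$, and no Hasse principle bridges this. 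The unspecified exponent $\pm 1$ on $|C(\mathbf{H},S)|$ and the tacit assumption that $\Gamma$ equals the full $S$-integral group are further signs that the displayed formula has not actually been assembled.
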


Again, since CSP* is known if the algebraic group is isotropic, the Harish-Chandra Theorem gives the following unconditional result.

\begin{theorem} \label{thm:non-uniform}
Let $G$ and $H$ be algebraically simply-connected semisimple Lie groups of rank at least two without compact factors.
Suppose $\Gamma \le G$ and $\Delta \le H$ are irreducible non-uniform lattices with $\widehat{\Gamma} \cong \widehat{\Delta}$.  Then the renormalized Killing covolumes of $\Gamma$ and $\Delta$ are equal.
\end{theorem}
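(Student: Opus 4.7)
The plan is to deduce Theorem~\ref{thm:non-uniform} from Theorem~\ref{thm:main-lattices} by verifying that irreducible non-uniform higher rank lattices automatically satisfy the hypothesis CSP*. In other words, once I know that non-uniformity forces the relevant algebraic groups to be $k$-isotropic, the theorem follows at once from the conditional result.

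In more detail, I would first apply Margulis arithmeticity (as recalled in the excerpt) to realize $\Gamma$, up to commensurability, as an $S$-arithmetic subgroup $\mathbf{H}(\mathcal{O}_{k,S})$ of $\prod_{v\in S^{\mathrm{is}}}\mathbf{H}(k_v)$, where $\mathbf{H}$ is the simply-connected absolutely almost simple $k$-group singled out by superrigidity; and similarly I would write $\Delta$ as an $S'$-arithmetic subgroup of a $k'$-group $\mathbf{H}'$. The Borel--Harish-Chandra theorem then says that $\mathbf{H}(\mathcal{O}_{k,S})$ is cocompact in the ambient $S$-adic group precisely when $\mathbf{H}$ is $k$-anisotropic. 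Since by hypothesis $\Gamma$ is non-uniform, $\mathbf{H}$ must be $k$-isotropic, and similarly for $\mathbf{H}'$. The excerpt explicitly notes that CSP* is known whenever the algebraic group is $k$-isotropic; this is the combined content of Raghunathan's theorem on the congruence kernel, the known cases of the Margulis--Platonov conjecture for isotropic groups, and the status of Conjecture U in the relevant isotropic outer forms of type $A_n$. Consequently both $\Gamma$ and $\Delta$ have CSP*, and Theorem~\ref{thm:main-lattices} yields the equality of renormalized Killing covolumes.

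The only potential obstacle is a bookkeeping one: verifying that the clause ``CSP* is known in the $k$-isotropic case'' truly covers every possibility allowed by Definition~\ref{def:simply-connected}, in particular that the finiteness of the congruence kernel, the agreement of its order with that of the metaplectic kernel, and the validity of Conjecture U for the problematic outer $A_n$ forms all hold in full generality under $k$-isotropy. This is, however, exactly the content of the ``notable case'' parenthetical in the introduction, and it has already been invoked in the excerpt to derive Theorem~\ref{thm:non-uniform-same-lie-group} from Theorem~\ref{thm:same-lie-group}; hence the same reduction applies verbatim here to pass from Theorem~\ref{thm:main-lattices} to Theorem~\ref{thm:non-uniform}.
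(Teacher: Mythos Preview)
Your proposal is correct and matches the paper's own argument essentially verbatim: the paper deduces Theorem~\ref{thm:non-uniform} from Theorem~\ref{thm:main-lattices} by invoking the Harish-Chandra criterion (non-uniform $\Leftrightarrow$ $k$-isotropic) and then Theorem~\ref{thm:status-of-csp}, which packages the isotropic case of CSP* exactly as you describe.
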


The outline of the article is as follows. In Section~\ref{sec:csp} we recall aspects of the congruence subgroup problem needed to formulate our results and we provide the definition of CSP*.  In Section~\ref{sec:measures}, we discuss the renormalization of Killing measures.  The main results are proven in Section~\ref{sec:main} which ends with a remark on profinite almost rigidity and how our results relate to the work of M.\,Aka~\cite{Aka:arithmetic}.  The article concludes with Appendix~\ref{appendix:s-adelic-superrigidity} on the \(S\)-arithmetic extension of the adelic superrigidity theorem.

\medskip
The authors are grateful to Deutsche Forschungsgemeinschaft (DFG) for financial support in the Priority Program ``Geometry at Infinity'' (DFG 441848266, 441425994).  H.\,K. and S.\,K. acknowledge additional funding from the DFG Research Training Group ``Algebro-Geometric Methods in Algebra, Arithmetic, and Topology'', DFG 284078965. R.\,K. acknowledges additional funding via the DFG Individual Research Grant KO 4323/15-1. The authors thank Andrei Rapinchuk for valuable discussions concering CSP.

\section{Around the congruence subgroup property}
\label{sec:csp}

To fix the notation and to define CSP* concisely, we review some aspects of the congruence subgroup property.  Let $k$ be a number field and let $S \subset V(k)$ be a finite subset that contains all infinite places.   We still denote the ring of $S$-integers in $k$ by $\mathcal{O}_{k,S}$.  Let $\mathbf{G}$ be a simply-connected absolutely almost simple linear algebraic $k$-group.  Picking an embedding $\mathbf{G} \subset \mathbf{GL_n}$, the $\mathcal{O}_{k,S}$-points $\mathbf{G}(\mathcal{O}_{k,S})$ are defined.  A subgroup $\Gamma \le \mathbf{G}(\mathcal{O}_{k,S})$ is called a \emph{congruence subgroup} if for some nonzero ideal $\mathfrak{a} \trianglelefteq \mathcal{O}_{k,S}$, the group $\Gamma$ contains the kernel of the reduction homomorphism $\mathbf{G}(\mathcal{O}_{k,S}) \rightarrow \mathbf{G}(\mathcal{O}_{k,S}/\mathfrak{a})$.  Taking the congruence subgroups as a unit neighborhood base defines the \emph{congruence topology} on $\mathbf{G}(k)$.  It is a priori coarser than the \emph{arithmetic topology} on $\mathbf{G}(k)$ consisting of all finite index subgroups of $\mathbf{G}(\mathcal{O}_{k,S})$.  Both topologies are independent of the chosen embedding $\mathbf{G} \subset \mathbf{GL_n}$.  With respect to the canonical uniform structure on $\mathbf{G}(k)$, the arithmetic topology defines the completion $\widehat{\mathbf{G}(k)}$ and the congruence topology defines the completion $\overline{\mathbf{G}(k)}$.  We have a canonical surjective homomorphism $\widehat{\mathbf{G}(k)} \rightarrow \overline{\mathbf{G}(k)}$.  The kernel of this homomorphism is denoted by $C(\mathbf{G},S)$ and is called the \emph{congruence kernel} of $\mathbf{G}$ with respect to $S$.  We say that $\mathbf{G}$ \emph{has CSP with respect to $S$} if the profinite group $C(\mathbf{G}, S)$ is actually finite.

The congruence kernel is closely related to the \emph{metaplectic kernel} $M(\mathbf{G}, S)$ of $\mathbf{G}$, where for any subset $V \subset V(k)$ we define
\[ M(\mathbf{G}, V) = \ker [H_m^2(\mathbf{G}(\mathbb{A}_{k, V}), I) \longrightarrow H^2(\mathbf{G}(k), I)]. \]
Here $I = \R/\Z$ denotes the one-dimensional real torus, $H_m^2$ refers to cohomology defined by measurable cocycles, and $\mathbb{A}_{k,V}$ is the ring of \emph{$V$-adeles} consisting of all elements in the product $\prod_{v \notin V} k_v$ almost all of whose coordinates lie in the ring of integers $\mathcal{O}_v \subset k_v$.  We will later write $\mathbb{A}_k = \mathbb{A}_{k, \emptyset}$ and $\mathbb{A}_k^f = \mathbb{A}_{k, V_\infty}$ where $V_\infty$ is the set of infinite places of $k$.  In fact, as explained in~\cite[Theorem 2]{Prasad-Rapinchuk:survey}, it is known that if $\mathbf{G}$ satisfies $\sum_{v \in S} \rank_{k_v} \mathbf{G} \ge 2$ and $\rank_{k_v} \mathbf{G} \ge 1$ for all finite $v \in S$ and if $\mathbf{G}$ has CSP with respect to $S$, then
\[
	C(\mathbf{G},S) \text{ is isomorphic to the Pontrjagin dual of } M(\mathbf{G}, S)
\]
provided the following is true.

\begin{conjecture}[Margulis--Platonov]
Let $T$ be the (possibly empty) set of finite places of $k$ at which $\mathbf{G}$ is $k_v$-anisotropic.  If $G_T = \prod_{v \in T} \mathbf{G}(k_v)$ denotes the corresponding profinite group, then every non-central normal subgroup $N \trianglelefteq \mathbf{G}(k)$ is given by intersecting the diagonally embeded $\mathbf{G}(k) \le G_T$ with an open normal subgroup $W \trianglelefteq G_T$.
\end{conjecture}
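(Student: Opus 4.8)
The Margulis--Platonov conjecture is a well-known open problem, so what follows is a plan for the cases in which it is established, together with an indication of where the obstruction sits in general.

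\emph{The easy direction.} First I would record that the assignment $W \mapsto N := \mathbf{G}(k) \cap W$ really does produce non-central normal subgroups and is injective. Since $T$ is finite while $\mathbf{G}$ is $k_v$-isotropic for all but finitely many finite places, there is a finite place $v_0 \notin T$ at which $\mathbf{G}$ is isotropic; strong approximation relative to $\{v_0\}$ (valid because $\mathbf{G}$ is simply connected and absolutely almost simple) shows that the diagonal image of $\mathbf{G}(k)$ is dense in the profinite group $G_T = \prod_{v \in T} \mathbf{G}(k_v)$. Density immediately gives that $\mathbf{G}(k) \cap W \trianglelefteq \mathbf{G}(k)$, that $\mathbf{G}(k)/(\mathbf{G}(k) \cap W) \cong G_T/W$ is finite, and that $W$ is recovered as the closure of $N$ in $G_T$. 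So the substantive content is the converse: every non-central normal $N \trianglelefteq \mathbf{G}(k)$ equals $\mathbf{G}(k) \cap \overline{N}$, i.e. $N$ has finite index and $\mathbf{G}(k)/N$ is a quotient of $G_T$.

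\emph{The hard direction.} Let $N$ be non-central and normal. The scheme has three stages. First, for every finite place $v \notin T$ the closure of the image of $N$ in $\mathbf{G}(k_v)$ is a normal subgroup; by the Kneser--Tits theorem over the local field $k_v$ one has $\mathbf{G}(k_v) = \mathbf{G}(k_v)^{+}$, and by Tits's simplicity theorem $\mathbf{G}(k_v)^{+}/Z$ is simple, so this closure is central or everything, and it cannot be central for all such $v$ without forcing $N$ itself to be central. Second, bounded-generation-by-unipotents arguments in the spirit of Margulis, Raghunathan, Tavgen and Prasad--Rapinchuk propagate this local largeness: using strong approximation one shows $N$ is dense in the restricted product of the $\mathbf{G}(k_v)$ over finite $v \notin T$, hence that $\mathbf{G}(k)/N$ ``lives only at $T$'', and finally that $N$ has finite index, so that $\mathbf{G}(k)/N$ is a genuine quotient of $G_T$. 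Third, one must exclude pathological non-central normal subgroups whose trace at $T$ is already all of $G_T$; this is a question about the anisotropic factors alone.

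\emph{Main obstacle.} The genuine difficulty is the third stage, i.e. the normal subgroup structure of the rational points of \emph{anisotropic} forms, which for type $A_n$ reduces to Whitehead groups: the reduced Whitehead group $\mathrm{SK}_1(D)$ of a division algebra for inner forms (controlled by Wang's theorem and Platonov's local--global analysis), and the reduced unitary Whitehead group for outer forms, whose finiteness is exactly the content of ``Conjecture U''. For the remaining classical and exceptional types the conjecture is a theorem, except for type $E_8$, where it is open. This is precisely why the hypothesis CSP* in the present paper carries along ``Conjecture U'' for outer forms of type $A_n$ together with a comparison with the metaplectic kernel, rather than invoking Margulis--Platonov unconditionally: the only obstacle remaining between our results and an unconditional statement is the type-$A_n$ anisotropic Whitehead group problem and the case $E_8$.
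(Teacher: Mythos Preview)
The statement you are addressing is a \emph{conjecture}: the paper does not prove it and offers no proof to compare against. It is stated solely to explain why condition~\eqref{item:cequalsm} in the definition of CSP* should be automatic. So there is no ``paper's own proof'' here, and your write-up should be read as an independent survey rather than a proof attempt.

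That said, your summary of the status of the conjecture contains factual errors that conflict with the paper itself. You write that Margulis--Platonov ``is a theorem, except for type $E_8$, where it is open''; but the paper, citing the Prasad--Rapinchuk survey, records the open cases as certain anisotropic outer forms of type $A_n$, anisotropic triality forms ${}^3D_4$ and ${}^6D_4$, and certain anisotropic $E_6$ forms---$E_8$ is not listed as open. More seriously, you identify Conjecture~(U) with the finiteness of the reduced unitary Whitehead group. That is not what Conjecture~(U) says: in the paper (and in Prasad--Rapinchuk), Conjecture~(U) is a statement about metaplectic kernels, namely $M_V(\mathbf{G}) = M_{V_0}(\mathbf{G})$ for special unitary groups, and it enters CSP* as a separate item~\eqref{item:conjecture-u}, independent of the Margulis--Platonov condition~\eqref{item:cequalsm}. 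Your final paragraph conflates these two distinct hypotheses and misstates why CSP* carries Conjecture~(U).
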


So the conjecture states in particular that $\mathbf{G}(k)$ does not admit non-central normal subgroups if $\mathbf{G}$ is isotropic at all finite places.  Note that this is automatic if $\mathbf{G}$ does not have type $A_n$~\cite{Kneser:galois}*{Satz~3}.

The metaplectic kernel has been calculated in general by Prasad and Rapinchuk~\cite{Prasad-Rapinchuk:metaplectic} with the caveat that ``Conjecture (U)'' has to be assumed if $\mathbf{G}$ is \emph{special}, meaning $\mathbf{G}$ is a type ${}^2 A_n$-group given by $\mathbf{G} = \mathbf{SU}(h)$ where $h$ is a non-degenerate hermitian form over a non-commutative division algebra with involution of the second kind.  To state the conjecture, for any finite set $V \subset V(k)$, we define
\[ M_V(\mathbf{G}) = \ker\left[H_m^2(\textstyle\prod_{v \in V} \mathbf{G}(k_v), I) \longrightarrow H^2(\mathbf{G}(k), I)\right]. \]
Note that for $V \subset W$, we get a canonical inclusion $M_V(\mathbf{G}) \subseteq M_W(\mathbf{G})$.

\begin{conjectureU}[see \cite{Prasad-Rapinchuk:metaplectic}] Suppose $\mathbf{G}$ is special and $V \subset V(k)$ is finite.  If $V_0 \subset V$ is the subset of finite places, then $M_V(G) = M_{V_0}(G)$.
\end{conjectureU}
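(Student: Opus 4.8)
\medskip

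Since $M_{V_0}(\mathbf{G}) \subseteq M_V(\mathbf{G})$ holds unconditionally, the plan is to prove the reverse inclusion, i.e.\ that every class in $M_V(\mathbf{G})$ already comes from the finite places in $V$. Write $V = V_0 \sqcup V'_\infty$, where $V'_\infty$ is the (possibly empty) set of archimedean places in $V$. The first step is to reduce to a statement about the archimedean local groups alone. Since $\mathbf{G}$ is simply connected and absolutely almost simple, none of the groups $\mathbf{G}(k_v)$ carries a nontrivial continuous character, so $H^1_m(\mathbf{G}(k_v), I) = 0$ for every $v$, and the K\"unneth formula for measurable (Moore) cohomology then gives a canonical isomorphism $H^2_m\bigl(\prod_{v \in V} \mathbf{G}(k_v), I\bigr) \cong \bigoplus_{v \in V} H^2_m(\mathbf{G}(k_v), I)$. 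Under this identification $M_V(\mathbf{G})$ is the group of tuples $(\xi_v)_{v \in V}$ with $\sum_{v} \operatorname{res}_{\mathbf{G}(k)} \xi_v = 0$ in $H^2(\mathbf{G}(k), I)$, while $M_{V_0}(\mathbf{G})$ is the subgroup of those with $\xi_v = 0$ for all $v \in V'_\infty$; so Conjecture (U) amounts to the assertion that for every such tuple the archimedean components $\xi_v$, $v \in V'_\infty$, vanish. I would first discard the automatic cases: for $v$ complex, $\mathbf{G}(k_v)$ is topologically simply connected, so $H^2_m(\mathbf{G}(k_v), I) = 0$; for $v$ real with $\mathbf{G}$ anisotropic at $v$, the group $\mathbf{G}(k_v)$ is a compact special unitary group, hence simply connected, and again $H^2_m(\mathbf{G}(k_v), I) = 0$. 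The only surviving real places are those at which $\mathbf{G}(k_v)$ is an $\mathbf{SU}(p,q)$ with $p, q \ge 1$ or an inner real form of $\mathbf{SL}_n(\mathbb{R})$; there $\pi_1(\mathbf{G}(k_v))$ is $\mathbb{Z}$ or $\mathbb{Z}/2$ and $H^2_m(\mathbf{G}(k_v), I) \cong \operatorname{Hom}(\pi_1(\mathbf{G}(k_v)), I)$ is a copy of $I$ or a group of order two.

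For those places the plan is a local--global reciprocity argument in the spirit of Moore's work on metaplectic covers and of the Prasad--Rapinchuk computation of the metaplectic kernel. Given $(\xi_v)_{v \in V} \in M_V(\mathbf{G})$, one interprets $\sum_v \xi_v$ as the class of a topological central extension of the finite product $\prod_{v \in V} \mathbf{G}(k_v)$ by $I$ that splits over the diagonally embedded $\mathbf{G}(k)$. Since only finitely many places intervene, the idea is to spread this extension out over $\mathbf{G}(\mathbb{A}_k)$, obtaining a class in the full metaplectic kernel $M(\mathbf{G}, \emptyset)$, and then to exploit its structure. One cannot merely quote the bound $M(\mathbf{G}, \emptyset) \hookrightarrow \widehat{\mu}(k)$, because for a \emph{special} $\mathbf{G}$ this bound is itself Prasad--Rapinchuk's theorem conditional on exactly the present conjecture; instead the plan is to rerun the underlying mechanism directly. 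Concretely: use strong approximation for the simply connected group $\mathbf{G}$ (applicable since $\mathbf{G}(k_v)$ is noncompact at one of the relevant $v$) to descend the extension along the dense image of $\mathbf{G}(k)$ in the group of points of $\mathbf{G}$ over the ad\`eles away from $v$, and feed the resulting local--global compatibility into the ``sum of local contributions is zero'' reciprocity for the symbols attached to $\mathbf{SU}(h)$. In that sum the archimedean term at $v$ should be governed by $\mu(k_v)$, which is of ``global'' nature --- it is pinned down by the roots of unity in $k$ and already becomes visible at finitely many \emph{finite} places through the reciprocity underlying Moore's theorem --- and this should force $\xi_v$ into the part of $H^2_m(\prod_{v\in V}\mathbf{G}(k_v), I)$ coming from $V_0$, i.e.\ $\xi_v = 0$.

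The genuinely hard step is this last one, and it is precisely what has kept (U) a conjecture. The reciprocity and ``symbol'' calculus that make the argument work for Chevalley groups, or more generally for quasi-split groups, rest on Matsumoto-type presentations of $\mathbf{G}(k_v)$ by root generators and Steinberg relations; for a special $\mathbf{G} = \mathbf{SU}(h)$ the hermitian form $h$ lives over a genuinely noncommutative division algebra, $\mathbf{G}$ is as far from quasi-split as it can be, and no adequate substitute for Moore's reciprocity --- nor for a Bruhat/Matsumoto description of the archimedean factors $\mathbf{SU}(p,q)$ --- is currently known. A realistic route would be to treat first the low-rank cases of hermitian forms over a degree-two division algebra, where exceptional isogenies (for instance of type $A_3 = D_3$) transfer the problem to orthogonal groups and classical Witt-theoretic reciprocity becomes available; and then to attempt a d\'evissage in the degree of the division algebra, or, alternatively, a direct computation of the relevant part of $H^2(\mathbf{G}(k), I)$ via the Hochschild--Serre spectral sequence for $\mathbf{G}(k) \le \mathbf{G}(\mathbb{A}_k)$ together with strong approximation, with the aim of showing that the archimedean summands $H^2_m(\mathbf{G}(k_v), I)$ inject.
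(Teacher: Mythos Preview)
The statement you are trying to prove is \emph{Conjecture~(U)}, and the paper does not prove it.  It is recorded precisely as an open conjecture of Prasad and Rapinchuk, and the paper's strategy is to \emph{assume} it as part of the definition of CSP* (Definition~\ref{definition:cspstar}\eqref{item:conjecture-u}) so that the computation of metaplectic kernels in Theorem~\ref{thm:metaplectic} goes through.  There is therefore no ``paper's own proof'' to compare your proposal against.

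As for the proposal itself, you essentially concede this: your third paragraph correctly identifies that the crucial step --- forcing the archimedean components \(\xi_v\) to vanish via a Moore-type reciprocity --- is exactly what is not known for special groups, because the Matsumoto/Steinberg presentations underpinning that reciprocity are unavailable for \(\mathbf{SU}(h)\) over a noncommutative division algebra.  Everything preceding that paragraph (the K\"unneth reduction, discarding complex and compact real places, interpreting tuples as split central extensions) is standard and sound, but it only reformulates the conjecture rather than proving it.  The sketched routes at the end (exceptional isogenies in low rank, d\'evissage in the degree of the division algebra, Hochschild--Serre) are reasonable research directions, not a proof.  In short: there is no gap to fix because you have not claimed a proof; you have written an outline of why Conjecture~(U) is open and what partial approaches might look like, which is an accurate assessment but not what the prompt asked for.
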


Now we can state the calculation of metaplectic kernels by Prasad--Rapinchuk.  For the reader's convenience, we slightly reformulate the result needed for our purpose and we comment on the proof.  This is well-known, but we could not locate this exact statement in the literature.  The group of roots of unity in $k$ is denoted by $\mu(k) \le k^*$.

\begin{theorem}\label{thm:metaplectic}
Assume $\mathbf{G}$ is isotropic at every finite place in $S$.  In case $\mathbf{G}$ is special, assume moreover Conjecture~(U) holds true.  If $S$ consists of the infinite places only and $\mathbf{G}_v$ is topologically simply-connected at every real place, then the metaplectic kernel $M(\mathbf{G}, S)$ is isomorphic to $\mu(k)$.  In all other cases, $M(\mathbf{G}, S)$ is trivial.
\end{theorem}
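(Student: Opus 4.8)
The plan is to deduce the theorem from the computation of the metaplectic kernel due to Prasad and Rapinchuk~\cite{Prasad-Rapinchuk:metaplectic}, whose main theorem determines $M(\mathbf{G}, S)$ for every finite set of places $S$ (assuming Conjecture~(U) in the special case, which is the hypothesis we carry there). What remains is to repackage their answer into the stated dichotomy and to observe that, under our standing assumptions that $S \supseteq V_\infty$ and that $\mathbf{G}$ is isotropic at every finite place of $S$, no intermediate possibility---$M(\mathbf{G}, S)$ a proper nontrivial subgroup of $\mu(k)$---can occur.

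I would begin by recalling the structure of the source. For all but finitely many $v \notin S$ the hyperspecial subgroup $\mathbf{G}(\mathcal{O}_v)$ contributes trivially, so $H_m^2(\mathbf{G}(\mathbb{A}_{k,S}), I)$ decomposes canonically as the restricted direct sum of the local groups $H_m^2(\mathbf{G}(k_v), I)$ over $v \notin S$, and the restriction to $H^2(\mathbf{G}(k), I)$ is the sum of the local restrictions along the dense embeddings $\mathbf{G}(k) \hookrightarrow \mathbf{G}(k_v)$. The local groups are known: for a non-archimedean place $v$ at which $\mathbf{G}$ is isotropic one has $H_m^2(\mathbf{G}(k_v), I) \cong \widehat{\mu(k_v)}$ by work of Moore, Deodhar and Prasad--Raghunathan; for a complex place it vanishes since $\mathbf{G}(k_v)$ is topologically simply connected; and for a real place the relevant class is detected through the topological universal covering of $\mathbf{G}(k_v)$, so the corresponding contribution vanishes precisely when $\mathbf{G}(k_v)$ is topologically simply connected---which is automatic when $\mathbf{G}$ is $k_v$-anisotropic, $\mathbf{G}(k_v)$ being compact in that case. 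Feeding this into Moore's reciprocity law for power residue symbols identifies $M(\mathbf{G}, \emptyset) = \ker[H_m^2(\mathbf{G}(\mathbb{A}_k), I) \to H^2(\mathbf{G}(k), I)]$ with a copy of $\widehat{\mu(k)}$, and $M(\mathbf{G}, S)$ is the part of this copy supported away from the places in $S$.

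The dichotomy then follows. If $S$ contains a non-archimedean place $v_0$, our isotropy hypothesis makes $H_m^2(\mathbf{G}(k_{v_0}), I) \cong \widehat{\mu(k_{v_0})}$ nontrivial, and one checks---this is the substance of~\cite{Prasad-Rapinchuk:metaplectic}---that every nonzero class in the above copy of $\widehat{\mu(k)}$ has nonzero component at $v_0$; since that component has been removed, $M(\mathbf{G}, S) = 1$. If instead $S = V_\infty$, the only components removed are archimedean and the complex ones vanish anyway, so $M(\mathbf{G}, S)$ is the kernel of the projection of $\widehat{\mu(k)}$ onto $\prod_{v \text{ real}} H_m^2(\mathbf{G}(k_v), I)$. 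If every real $\mathbf{G}(k_v)$ is topologically simply connected this projection is zero and $M(\mathbf{G}, S) = \widehat{\mu(k)} \cong \mu(k)$; otherwise---using that a number field admitting a real place has $\mu(k) = \{\pm 1\}$---the projection is injective and $M(\mathbf{G}, S) = 1$. In the special case this goes through unchanged except that the identification of $M(\mathbf{G}, \emptyset)$ and its behaviour under deleting $S$ rest on Conjecture~(U), i.e. on the equality $M_V(\mathbf{G}) = M_{V_0}(\mathbf{G})$, which is exactly what~\cite{Prasad-Rapinchuk:metaplectic} uses there.

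The step I expect to be the main obstacle is the one I would quote rather than redo: that no nonzero class of $M(\mathbf{G}, \emptyset)$ is supported entirely on $V_\infty$ once $S$ acquires a finite place, and, dually, the precise description of which archimedean data obstruct when $S = V_\infty$. This is the heart of Prasad--Rapinchuk's analysis and leans essentially on their treatment of the local metaplectic kernels together with reciprocity. The only genuinely new bookkeeping is to match their archimedean invariant with the condition ``$\mathbf{G}(k_v)$ topologically simply connected at every real place'' and to check that, with $S \supseteq V_\infty$ and $\mathbf{G}$ isotropic at the finite places of $S$, no proper intermediate subgroup of $\mu(k)$ arises.
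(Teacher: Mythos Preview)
Your overall strategy is sound and lands in the same place as the paper: quote Prasad--Rapinchuk for the ``all other cases'' triviality, and for $M(\mathbf{G},\emptyset)\cong\mu(k)$, and give a separate argument that $M(\mathbf{G},V_\infty)\cong M(\mathbf{G},\emptyset)$ when every real factor is topologically simply connected. The difference is in how that last step is carried out.

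You argue via the restricted direct sum decomposition of $H_m^2(\mathbf{G}(\mathbb{A}_k),I)$ into local pieces, together with the local vanishing at simply-connected archimedean places, and then read off $M(\mathbf{G},S)$ as the part of $M(\mathbf{G},\emptyset)\cong\widehat{\mu(k)}$ with zero $S$-components. The paper instead avoids the local decomposition entirely: it shows directly that for the archimedean product $G=\prod_{v\mid\infty}\mathbf{G}(k_v)$ one has $H^1_m(G,I)=0$ (semisimplicity) and $H^2_m(G,I)=0$, and then the inflation--restriction sequence for $1\to G\to\mathbf{G}(\mathbb{A}_k)\to\mathbf{G}(\mathbb{A}_k^f)\to 1$ gives the isomorphism $M(\mathbf{G},V_\infty)\cong M(\mathbf{G},\emptyset)$ in one stroke. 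The vanishing of $H^2_m(G,I)$ is obtained by a concrete chain of identifications rather than by citation: Moore's theorem ($G$ simply connected $\Rightarrow H^2_m(G,I)\cong H^2_m(G,\R)$), Wigner's identification with relative Lie algebra cohomology $H^2(\fg,\fk;\R)$, the Chevalley--Eilenberg--Cartan isomorphism with $H^2(G_u/K;\R)$, and finally the observation that $G_u/K$ is $2$-connected because both $G_u$ and $K$ are (being deformation retracts of the simply-connected $G_\C$ and $G$, and Lie groups have trivial $\pi_2$).

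What each approach buys: yours is shorter if one is willing to import the local computations and the adelic decomposition wholesale from the Prasad--Rapinchuk machinery; the paper's is more self-contained at exactly the point where your sketch says ``the corresponding contribution vanishes precisely when $\mathbf{G}(k_v)$ is topologically simply connected''---that is the assertion the paper actually proves. Note also that your treatment of the non-simply-connected real case (``the projection is injective since $\mu(k)=\{\pm1\}$'') would still need the fact that the nontrivial class really has nonzero component at such a place; the paper bypasses this entirely by citing the Prasad--Rapinchuk main theorem for all the trivial cases at once.
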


\begin{proof}
  The main theorem of~\cite{Prasad-Rapinchuk:metaplectic} by Prasad--Rapinchuk states explicitly that in the remaining cases $M(\mathbf{G}, S)$ is trivial.  If on the other hand, $S$ is the set of infinite places and $\mathbf{G}$ is topologically simply-connected at all real places, then the real Lie group
  \[ G = \prod_{v \in S} \mathbf{G}(k_v) \]
  is simply-connected because at complex places the notions of algebraic and topological simply-connectedness coincide.  Therefore, we have $H^2_m(G,I) \cong H^2_m(G, \R)$ as proven by Moore in~\cite{Moore:extensions}*{Theorem~A}.  The latter can be computed by the Lie algebra cohomology $H^2_m(G, \R) \cong H^2(\mathfrak{g}, \mathfrak{k}; \R)$ according to a result of Wigner \cite{Wigner:algebraic-cohomology}*{Corollary to Theorem~3}, where $\mathfrak{k}$ is the Lie algebra of a maximal compact subgroup $K \le G$.   By the classical work of Chevalley, Eilenberg, and Cartan, we have $H^2(\mathfrak{g}, \mathfrak{k}; \R) \cong H^2(G_u/K;\R)$, where $G_u$ is the compact form of the complexified Lie group $G_\C \cong \prod_{v \mid \infty} \operatorname{Res}_{k_v/\R} \mathbf{G}_v(\C)$.  Since $G_u$ is a deformation retract of $G_\C$, while $K$ is a deformation retract of $G$, both $G_u$ and $K$ are 2-connected, so the exact sequence of homotopy groups of the fibration
  \[ 1 \longrightarrow K \longrightarrow G_u \longrightarrow G_u/K \longrightarrow 1 \]
  shows that $\pi_2(G_u/K) \cong \pi_1 (G_u/K) \cong 1$.  Finally, the Hurewicz theorem in combination with the universal coefficient theorem imply that $H^2(G_u/K, \R) \cong 0 $ so that $H^2_m(G, I) \cong 0$.  Moore's work also shows that $H^1_m(G, I) \cong H^1_c(G, I)$ coincides with the first group cohomology with respect to continuous cocycles~\cite{Moore:group-extensions}*{Theorem~3 and Corollary~1}.  As $G$ acts trivially on $I$ and since $I$ is abelian,  $H^1_c(G,I)$ is given by continuous group homomorphisms $G \rightarrow I$ and any such homomorphism must be trivial because $G$ is semisimple.  So also $H^1_m(G,I) \cong 0$.

  Therefore, the inflation-restriction exact sequence for measurable cohomology associated with the short exact sequence
  \[ 1 \longrightarrow G \longrightarrow \mathbf{G}(\mathbb{A}_k) \longrightarrow \mathbf{G}(\mathbb{A}_k^f) \longrightarrow 1 \]
  shows that the inflation map 
  \[ H_m^2(\mathbf{G}(\mathbb{A}^f_k), I) \longrightarrow H_m^2(\mathbf{G}(\mathbb{A}_k), I) \]
induced by the projection $\mathbf{G}(\mathbb{A}_k) \rightarrow \mathbf{G}(\mathbb{A}^f_k)$ is an isomorphism.  By functoriality, the kernels of the restriction maps to $H^2(\mathbf{G}(k), I)$ are thus also isomorphic, meaning $M(\mathbf{G}, S) \cong M(\mathbf{G}, \emptyset)$.  Under the assumption of Conjecture~$(U)$ if $\mathbf{G}$ is special, the Prasad--Rapinchuk theorem now gives $M(\mathbf{G}, \emptyset) \cong \mu(k)$.
\end{proof}

In order to have these calculations fully available, we give the following definition.

\begin{definition} \label{definition:cspstar} We say that $\mathbf{G}$ \emph{has CSP* with respect to $S$} if
  \begin{enumerate}
  \item $\mathbf{G}$ has CSP with respect to $S$,
  \item \label{item:cequalsm} $|C(\mathbf{G}, S)| = |M(\mathbf{G}, S)|$, 
  \item \label{item:conjecture-u} Conjecture~(U) holds true if $\mathbf{G}$ is special.
  \end{enumerate}
  Let $\Gamma$ be an irreducible lattice in an algebraically simply-connected semisimple Lie group $G$ of rank at least two and without compact factors and let $\mathbf{G}$ be the unique simple algebraic group in which $\Gamma$ is $S$-arithmetic given by Margulis' arithmeticity theorem.  Then we say \emph{$\Gamma$ has CSP*} if $\mathbf{G}$ has CSP* with respect to~$S$.
\end{definition}

Assuming $\sum_{v \in S} \rank_{k_v} \mathbf{G} \ge 2$ and $\rank_{k_v} \mathbf{G} \ge 1$ for all finite places $v \in S$,  we just explained that \eqref{item:cequalsm} holds true if the Margulis--Platonov conjecture is true for $\mathbf{G}$.  In that case, finiteness of $C(\mathbf{G},S)$ is actually equivalent to centrality \cite{Prasad-Rapinchuk:survey}*{Theorem~2} and a well-known conjecture of Serre \cite{Platonov-Rapinchuk:algebraic-groups}*{(9.45), p.\,556} says that this should be true under our assumption on the ranks.

According to \cite{Prasad-Rapinchuk:survey}*{Section~3.5}, the Margulis--Platonov conjecture has been established in all cases except for certain anisotropic outer forms of type $A_n$, anisotropic forms of type ${}^3 D_4$ and ${}^6 D_4$, and certain anisotropic type $E_6$ forms.  Similarly, according to \cite{Prasad-Rapinchuk:survey}*{Section~5.1}, centrality of $C(\mathbf{G},S)$ is open for all anisotropic inner forms of type $A_n$, ``most'' anisotropic outer forms of type $A_n$, anisotropic forms of type ${}^3 D_4$ and ${}^6 D_4$, and ``most'' anisotropic groups of type $E_6$.  To summarize:

\begin{theorem} \label{thm:status-of-csp}
  Suppose $\sum_{v \in S} \rank_{k_v} \mathbf{G} \ge 2$ and $\rank_{k_v} \mathbf{G} \ge 1$ for all finite places $v \in S$.  Then the group $\mathbf{G}$ has CSP* with respect to $S$ if either
  \begin{enumerate}
  \item $\mathbf{G}$ is $k$-isotropic, or
  \item $\mathbf{G}$ is $k$-anisotropic but not of type $A_n$, not a triality form of type $D_4$, and not of exceptional type $E_6$.
  \end{enumerate}
\end{theorem}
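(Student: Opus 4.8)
The plan is to verify, under either hypothesis, the three conditions of Definition~\ref{definition:cspstar}: that $\mathbf{G}$ has CSP with respect to $S$, that $|C(\mathbf{G},S)| = |M(\mathbf{G},S)|$, and that Conjecture~(U) holds if $\mathbf{G}$ is special. All three will follow by assembling the results recalled in this section, so the argument is chiefly a matter of matching case lists; I would treat the isotropic and the anisotropic hypotheses in turn.

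Under hypothesis (1), that $\mathbf{G}$ is $k$-isotropic, I would first observe that $k$-isotropy forces $k_v$-isotropy at every place $v$, so the set $T$ of finite places of anisotropy appearing in the Margulis--Platonov conjecture is empty, and the conjecture then asserts merely that $\mathbf{G}(k)$ carries no non-central normal subgroup. For the simply-connected $k$-isotropic group $\mathbf{G}$ this is a theorem: the Kneser--Tits conjecture holds over number fields, so $\mathbf{G}(k) = \mathbf{G}(k)^+$, and Tits' simplicity theorem gives that $\mathbf{G}(k)^+ / Z$ is simple (see \cite{Platonov-Rapinchuk:algebraic-groups}). With Margulis--Platonov available and the rank hypotheses in force, \cite{Prasad-Rapinchuk:survey}*{Theorem~2} says finiteness of $C(\mathbf{G},S)$ is equivalent to its centrality; centrality for $k$-isotropic $\mathbf{G}$ of $S$-rank at least two is classical, going back through Bass--Milnor--Serre, Matsumoto, Prasad--Raghunathan, and Raghunathan (see \cite{Prasad-Rapinchuk:survey}*{Section~5}), so $\mathbf{G}$ has CSP with respect to $S$. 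The same theorem then identifies $C(\mathbf{G},S)$ with the Pontrjagin dual of $M(\mathbf{G},S)$; since $C(\mathbf{G},S)$ is now finite, so is $M(\mathbf{G},S)$, and the two groups have equal order. Finally, a special $\mathbf{G}$ satisfying hypothesis (1) is $k$-isotropic, and for $k$-isotropic special groups Conjecture~(U) is among the cases established in \cite{Prasad-Rapinchuk:metaplectic}, so condition (3) holds as well.

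Under hypothesis (2), that $\mathbf{G}$ is $k$-anisotropic but not of type $A_n$, not a triality form of type $D_4$, and not of type $E_6$, condition (3) is vacuous, since such a $\mathbf{G}$ is not of type ${}^2A_n$ and hence not special. For the other two conditions I would run the same chain through Margulis--Platonov: by \cite{Prasad-Rapinchuk:survey}*{Section~3.5} the conjecture holds for every $\mathbf{G}$ outside the excluded types (certain anisotropic outer forms of type $A_n$, the triality forms ${}^3D_4$ and ${}^6D_4$, and certain anisotropic $E_6$-forms), and by \cite{Prasad-Rapinchuk:survey}*{Section~5.1} centrality of $C(\mathbf{G},S)$ is likewise known for every $\mathbf{G}$ outside those same types; the rank hypotheses promote centrality to finiteness via \cite{Prasad-Rapinchuk:survey}*{Theorem~2}, giving CSP, and that theorem again yields $C(\mathbf{G},S) \cong M(\mathbf{G},S)^{\vee}$ and hence $|C(\mathbf{G},S)| = |M(\mathbf{G},S)|$.

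The parts that call for genuine care rather than bookkeeping are the reduction of Margulis--Platonov to Kneser--Tits and Tits simplicity in the isotropic case --- one must be sure that $k$-isotropy really does force $T = \emptyset$ and that no residual non-central normal subgroup can survive --- and the verification that the type restrictions in hypothesis (2) sit inside the intersection of the ranges of validity of the Margulis--Platonov conjecture and of centrality of the congruence kernel, and in particular exclude every special group, so that Conjecture~(U) is never needed in the anisotropic case. I expect the delicate point to be the bookkeeping around the anisotropic triality-$D_4$ and $E_6$ forms, since $D_4$ and $E_6$ support several $k$-forms and only some of them remain currently intractable.
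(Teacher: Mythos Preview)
Your proposal is correct and follows essentially the same route as the paper: the theorem is stated there as a summary of the preceding discussion, which reduces conditions (1) and (2) of CSP* to the Margulis--Platonov conjecture plus centrality of the congruence kernel via \cite{Prasad-Rapinchuk:survey}*{Theorem~2}, and then quotes the case lists from \cite{Prasad-Rapinchuk:survey}*{Sections~3.5 and~5.1}. You are in fact more careful than the paper on one point: you explicitly address condition~(3) in the isotropic case by noting that Conjecture~(U) is established for $k$-isotropic special groups in \cite{Prasad-Rapinchuk:metaplectic}, whereas the paper's summary leaves this implicit.
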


Recall that according to the Harish-Chandra Theorem, an irreducible lattice $\Gamma$ in an algebraically simply-connected semisimple Lie group $G$ is uniform if and only if the corresponding unique simple algebraic group $\mathbf{G}$ is $k$-anisotropic.  This explains how Theorem~\ref{thm:non-uniform} follows from Theorem~\ref{thm:main-lattices}.

Finally, we will explain in the proof of Theorem~\ref{thm:same-lie-group} that any irreducible lattice \(\Gamma \le G\) in a higher rank semisimple Lie group with finite center and without compact factors has a finite index subgroup \(\Gamma' \le \Gamma\) which embeds as an irreducible lattice in a uniquely determined algebraically simply-connected semisimple Lie group of rank at least two without compact factors.  It is then well-defined to say that \(\Gamma\) has CSP* if \(\Gamma'\) has CSP*.

\section{Measures on algebraic groups} \label{sec:measures}

In this section we fix measures on algebraically simply-connected semisimple Lie groups.
To this end, let $(K,|\cdot|)$ be a local field of characteristic $0$ with absolute value $|\cdot|$.
We normalize a Haar measure $\mu_K$ on $K$ in the following way:
\begin{itemize}
\item $K = \bbR$: fix $\mu_\bbR([0,1]) = 1$
\item $K = \bbC$:  fix $\mu_\bbC([0,1]+i[0,1]) = 2$
\item $K$ nonarchimedean with ring of integers $\mathcal{O}$: fix $\mu_K(\mathcal{O}) = 1$.
\end{itemize}

Let $G$ be a semisimple Lie group over $K$ of dimension $d$ and let $\fg$ denote its Lie algebra. Every non-trivial form $\omega_0 \in \bigwedge^d \fg^*$ extends to a unique left-invariant top-degree differential form $\omega$ on $G$ and induces a left Haar measure $\mu_\omega$ (that depends on the fixed Haar measures $\mu_K$ on $K$).
As $\fg$ is semisimple and $K$ is of characteristic $0$, the Killing form $B_\fg$ on $\fg$ is non-degenerate. The Killing form can be used to define a canonical normalization of the Haar measure $\mu^{\dagger}_G$. 
Let $e_1,\dots, e_d$ be an ordered basis of $\fg$ and let $e_1^*,\dots,e_n^*$ denote the associated ordered dual basis. For $\omega_0 = e_1^*\wedge e_2^* \wedge \dots \wedge e_n^*$ we define the \emph{Killing measure}
\[
	\mu^{\dagger}_G = \sqrt{|\det\bigl((B(e_i,e_j))_{i,j}\bigr)|} \; \mu_{\omega}.
\]
This is independent of the chosen ordered basis. Indeed, if the base change is given by a transformation matrix $T$, then $\omega_0$ transforms with the inverse determinant of $T$ and $\det\bigl((B(e_i,e_j))_{i,j}\bigr)$ transforms with $\det(T)^2$.

\begin{lemma}\label{lem:tamagawa-to-killing}
Let $\mathbf{G}$ be a connected semisimple $d$-dimensional linear algebraic group over a number field $k$ with discriminant~$d_k$.  Then the Tamagawa measure $\tau$ and the product measure $\mu^\dagger = \prod_{v \in V(k)} \mu^\dagger_{\mathbf{G}(k_v)}$ on $\mathbf{G}(\bbA_k)$ satisfy
\[
	 |d_k|^{-d/2} \mu^\dagger = \tau.
\]
\end{lemma}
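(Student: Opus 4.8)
The plan is to compare the two measures factor by factor over the places of $k$, using the fact that both the Tamagawa measure and the product measure $\mu^\dagger$ are built from a common choice of invariant $k$-rational top-degree differential form on $\mathbf{G}$. First I would fix a nonzero left-invariant algebraic $k$-form $\omega^{\mathrm{alg}} \in \bigwedge^d (\operatorname{Lie}\mathbf{G})^*$ defined over $k$, so that at each place $v$ it determines a local measure $\mu_{\omega^{\mathrm{alg}}, v}$ on $\mathbf{G}(k_v)$ with respect to the self-dual-type normalizations $\mu_{k_v}$ chosen in Section \ref{sec:measures} (namely $\mu_{\bbR}([0,1])=1$, $\mu_{\bbC}$ twice Lebesgue, $\mu_{k_v}(\mathcal{O}_v)=1$). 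Recall that the Tamagawa measure is by definition $\tau = |d_k|^{-d/2}\prod_{v} \mu_{\omega^{\mathrm{alg}},v}$, where the normalizing factor $|d_k|^{-d/2}$ is exactly what makes the product of local measures independent of the choice of $\omega^{\mathrm{alg}}$ (this is the standard point that $\prod_v |c|_v = 1$ for $c\in k^*$, combined with the discriminant appearing in $\prod_v \mu_{k_v}$ versus the self-dual measure on $\bbA_k$); since $\mathbf{G}$ is semisimple there is no convergence factor to worry about. So it suffices to prove the purely local identity
\[
	\mu^\dagger_{\mathbf{G}(k_v)} = \mu_{\omega^{\mathrm{alg}}, v}
\]
for every place $v$, after which multiplying over all $v$ and inserting the factor $|d_k|^{-d/2}$ yields the claim.

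For the local identity I would argue as follows. Pick a $k$-basis $e_1,\dots,e_d$ of $\operatorname{Lie}\mathbf{G}$; then $\omega^{\mathrm{alg}}$ is a $k^*$-multiple of $e_1^*\wedge\dots\wedge e_d^*$, and rescaling $\omega^{\mathrm{alg}}$ by $c\in k^*$ multiplies $\mu_{\omega^{\mathrm{alg}},v}$ by $|c|_v$ while leaving $\mu^\dagger_{\mathbf{G}(k_v)}$ untouched — but it also changes nothing in the global product by the product formula, so I may as well normalize and simply take $\omega_0 = e_1^*\wedge\dots\wedge e_d^*$ as in the definition of the Killing measure. With this choice, by the very definition in Section \ref{sec:measures},
\[
	\mu^\dagger_{\mathbf{G}(k_v)} = \sqrt{\bigl|\det\bigl(B_{\mathfrak{g}}(e_i,e_j)\bigr)\bigr|_v}\; \mu_{\omega_0, v},
\]
where $B_{\mathfrak g}$ is the Killing form. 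The key observation is that the entries $B_{\mathfrak g}(e_i,e_j)$ are rational numbers — in fact elements of $k$ — since the structure constants of $\operatorname{Lie}\mathbf{G}$ in the basis $(e_i)$ lie in $k$ and the Killing form is a polynomial in them. Hence $\det\bigl(B_{\mathfrak g}(e_i,e_j)\bigr) \in k^*$ (non-zero by non-degeneracy of the Killing form in characteristic $0$), and by the product formula $\prod_v \bigl|\det(B_{\mathfrak g}(e_i,e_j))\bigr|_v = 1$, so $\prod_v \sqrt{|\det(B_{\mathfrak g}(e_i,e_j))|_v} = 1$. Therefore $\prod_v \mu^\dagger_{\mathbf{G}(k_v)} = \prod_v \mu_{\omega_0, v}$, and this last product is exactly $|d_k|^{d/2}\tau$ by the definition of the Tamagawa measure; rearranging gives $|d_k|^{-d/2}\mu^\dagger = \tau$.

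The main obstacle — really the only subtle point — is pinning down the precise normalization conventions so that the local factors $\mu_{\omega_0,v}$ multiply to $|d_k|^{d/2}\tau$ rather than $\tau$ or $|d_k|^{-d/2}\tau$; this is bookkeeping about the discriminant that enters when one passes from the product of the local self-dual-type measures $\mu_{k_v}$ to the canonical self-dual Haar measure on $\bbA_k$ (the usual statement being that the latter gives $\bbA_k/k$ covolume $1$, while the former gives covolume $|d_k|^{1/2}$). Since the normalizations of $\mu_{k_v}$ fixed in Section \ref{sec:measures} are precisely the standard ones used in the definition of the Tamagawa measure (Weil, Oesterl\'e, or Prasad's conventions all agree here), this amounts to citing that definition; I would state it explicitly and reference a standard source. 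Everything else — the product formula for $k^*$, rationality of the Killing form, absence of convergence factors for semisimple groups, and independence of the chosen $\omega_0$ — is routine.
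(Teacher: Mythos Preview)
Your argument is essentially the same as the paper's: fix a $k$-basis of $\mathfrak{g}$, write $\mu^\dagger_{\mathbf{G}(k_v)} = \sqrt{|\det B(e_i,e_j)|_v}\,\mu_{\omega_0,v}$, apply the product formula to kill the Killing-form factor globally, and identify $\prod_v \mu_{\omega_0,v}$ with $|d_k|^{d/2}\tau$. One expository slip: the ``purely local identity'' $\mu^\dagger_{\mathbf{G}(k_v)} = \mu_{\omega^{\mathrm{alg}},v}$ you announce is false in general (the two differ by $\sqrt{|\det B|_v}$ at each place), but you never actually use it --- you correctly pass to the product and invoke the product formula instead, which is exactly what the paper does.
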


\begin{proof}
  Let $\fg$ be the $k$-Lie algebra of $\mathbf{G}$.  We pick an ordered basis $e_1,\dots, e_d$ of $\fg$ so that the multilinear form $\omega_0 = e_1^* \wedge \dots \wedge e_d^*$ extends to a left-invariant algebraic top-dimensional $k$-form $\omega$ on $\mathbf{G}$.  Then for every $v \in V(k)$, the form $\omega$ extends in turn to a differential $k_v$-form on $\mathbf{G}(k_v)$ and gives us a measure $\mu_{\omega,v}$ as explained above.  The Tamagawa measure $\tau$ is then given by $\tau = |d_k|^{-d/2} \prod_{v \in V(k)} \mu_{\omega,v}$.  For a different basis, the differential form $\omega$ is scaled by the determinant of the transformation matrix and we have the product formula $\prod_{v \in V(k)} |a|_v = 1$ for all $a \in k$, so $\tau$ is well-defined.  Since $\det B(e_i, e_j) \in k$, the product formula also gives
  \[ \prod_{v \in V(k)} \mu_{\omega,v} = \prod_{v \in V(k)} \sqrt{|\det B(e_i, e_j)|_v} \ \mu_{\omega, v} = \prod_{v \in V(k)} \mu^\dagger_{\mathbf{G}(k_V)} = \mu^\dagger, \]
  whence the assertion.
\end{proof}
Recall that we defined the renormalized Killing volume in Definition~\ref{def:renormalized-measure} in the introduction.
\begin{lemma} \label{lem:forget-compact-factors}
Let $G$ be an algebraically simply-connected semisimple Lie group and let $\pi\colon G \to G^{\text{is}}$ be the canonical projection to the product of the noncompact factors. Assume the kernel of $\pi$ is a real Lie group.  If $\Gamma \subseteq G$ is a torsion-free lattice, then the renormalized Killing covolume of $\Gamma$ in $G$ equals the renormalized Killing covolume of $\pi(\Gamma)$ in~$G^{\text{is}}$
 \end{lemma}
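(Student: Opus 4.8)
The plan is to exploit the product decomposition $G = G^{\text{is}} \times K$, where $K = \ker\pi = \prod_{\alpha\in A^{\text{c}}}\mathbf{G}_\alpha(k_\alpha)$ is the product of the anisotropic factors and $A = A^{\text{is}}\sqcup A^{\text{c}}$ splits the index set into the isotropic and the anisotropic parts, and to compare the two renormalized Killing measures factor by factor. Since $K$ is assumed to be a real Lie group and an anisotropic factor $\mathbf{G}_\alpha(k_\alpha)$ can be a (positive-dimensional) real Lie group only when $k_\alpha = \bbR$, every $\alpha\in A^{\text{c}}$ has $k_\alpha = \bbR$ and $\mathbf{G}_\alpha(\bbR)$ is a compact connected Lie group; being compact it is \emph{the} compact real form of $\mathbf{G}_\alpha$, so in Definition~\ref{def:renormalized-measure} the constant $c_\alpha$ is exactly the total $\mu^\dagger$-mass of $\mathbf{G}_\alpha(\bbR)$ and hence $c_\alpha^{-1}\mu^\dagger_{\mathbf{G}_\alpha(\bbR)}$ is a probability measure on $\mathbf{G}_\alpha(\bbR)$.

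First I would record how $\Gamma$ behaves under $\pi$. As $K$ is compact and $\Gamma$ is discrete and torsion-free, $\Gamma\cap K$ is finite, hence trivial, so $\pi$ restricts to an isomorphism of $\Gamma$ onto $\pi(\Gamma)$; since $K$ is a compact normal (here direct-factor) subgroup, $\pi(\Gamma)$ is a lattice in $G^{\text{is}}$. If $F\subseteq G^{\text{is}}$ is a measurable fundamental domain for $\pi(\Gamma)$, then $F\times K\subseteq G$ is a measurable fundamental domain for $\Gamma$: given $(g,k)\in G^{\text{is}}\times K$ there is a unique $\gamma\in\Gamma$ with $\pi(\gamma)^{-1}g\in F$, and then $\gamma^{-1}(g,k)\in F\times K$ automatically, uniqueness being read off the first coordinate.

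Next I would unwind the measures. Splitting the defining product of $\mu^\diamond_G$ along $A = A^{\text{is}}\sqcup A^{\text{c}}$ and comparing with the defining product of $\mu^\diamond_{G^{\text{is}}}$ gives
\[
  \mu^\diamond_G = \frac{\delta}{\delta'}\bigl(\mu^\diamond_{G^{\text{is}}}\times\nu\bigr), \qquad \nu = \prod_{\alpha\in A^{\text{c}}} c_\alpha^{-1}\mu^\dagger_{\mathbf{G}_\alpha(\bbR)},
\]
where $\delta$ and $\delta'$ denote the constants of Definition~\ref{def:renormalized-measure} attached to $G$ and $G^{\text{is}}$. By the first paragraph $\nu$ is a probability measure on $K$, so integrating over the fundamental domain yields
\[
  \mu^\diamond_G(G/\Gamma) = \mu^\diamond_G(F\times K) = \frac{\delta}{\delta'}\,\mu^\diamond_{G^{\text{is}}}(F)\,\nu(K) = \frac{\delta}{\delta'}\,\mu^\diamond_{G^{\text{is}}}\bigl(G^{\text{is}}/\pi(\Gamma)\bigr).
\]

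Finally I would show $\delta = \delta'$, which is the only step with content. Both constants are $2$ or $1$ according to (i) whether all factor fields are archimedean and (ii) whether some factor $\mathbf{G}_\alpha(\bbR)$ fails to be topologically simply-connected. For (i), deleting the indices in $A^{\text{c}}$ changes nothing, since those places are archimedean anyway, so $A$ is all-archimedean iff $A^{\text{is}}$ is. For (ii), each $\mathbf{G}_\alpha(\bbR)$ with $\alpha\in A^{\text{c}}$ is connected (as $\mathbf{G}_\alpha$ is simply-connected over $\bbR$) and topologically simply-connected: it is a maximal compact subgroup of, hence a deformation retract of, the complex group $\mathbf{G}_\alpha(\bbC)$, which is topologically simply-connected because $\mathbf{G}_\alpha$ is algebraically simply-connected and at complex places algebraic and topological simple-connectedness coincide. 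Hence no index in $A^{\text{c}}$ can witness (ii), so (ii) holds for $A$ iff it holds for $A^{\text{is}}$. Therefore $\delta = \delta'$, and the displayed identity is the assertion of the lemma. The main obstacle is precisely this bookkeeping of the factor $\delta$ — concretely the topological simple-connectedness of compact real forms of simply-connected groups; everything else is routine manipulation of product Haar measures and fundamental domains.
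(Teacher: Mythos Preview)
Your proof is correct and follows essentially the same approach as the paper's: both decompose $G = G^{\text{is}} \times K$, use torsion-freeness to get $\Gamma \cap K = \{1\}$ and a fundamental domain of the form $F \times K$, observe that the renormalized Killing measure of each compact factor is a probability measure, and verify $\delta = \delta'$ via the fact that a compact real form of an algebraically simply-connected group is a deformation retract of its (topologically simply-connected) complex points. Your write-up is somewhat more explicit in unpacking the product measure and the $\delta$-bookkeeping, but there is no substantive difference in strategy.
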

 
 \begin{proof}
   We first note that the factors $\delta$ in the renormalized Killing measures for $G$ and $G^{\text{is}}$ agree.  Indeed, if $\mathbf{G}_{\alpha}$ is anisotropic and $k_\alpha = \bbR$, then $\mathbf{G}_{\alpha}(k_\alpha)$ is topologically simply-connected, being a deformation retract of $\mathbf{G}_\alpha(\C)$ which is topologically simply-connected because it is algebraically so.
   
 By assumption, we have $G = K \times G^{\text{is}}$ for a compact Lie group~$K$.  Since $\Gamma$ is torsion-free and discrete, we have $\Gamma \cap K = \{1\}$.  We choose a measurable fundamental domain $\mathcal{F}$ for the action of $\Gamma$ on $G^{\text{is}}$.  Then $K \times \mathcal{F}$ is a measurable fundamental domain for $\Gamma$ in $G$.
 Hence the ratio of the covolumes of $\Gamma$ and $\pi(\Gamma)$ is the renormalized Killing volume of $K$, which is one by construction.
 \end{proof}

 \section{Proof of the main theorem} \label{sec:main}

 Let $k$ be a number field and let $S \subset V(k)$ be a finite subset containing all infinite places.  We extend the terminology in \cite{KammeyerKionke}*{Definition~3.1} and say that a simply-connected absolutely almost simple $k$-group $\mathbf{G}$ is \emph{$S$-algebraically superrigid} if for every field $l$ of characteristic zero, every $S$-arithmetic subgroup $\Gamma$ of $\mathbf{G}$, and every homomorphism $\delta \colon \Gamma \rightarrow \mathbf{H}(l)$ to the $l$-points of a connected and absolutely almost simple $l$-group $\mathbf{H}$ with Zariski dense image, there exists a unique homomorphism $\sigma \colon k \rightarrow l$, a unique $l$-epimorphism $\eta \colon {}^\sigma \mathbf{G} \rightarrow \mathbf{H}$, and a unique homomorphism $\nu \colon \Gamma \rightarrow Z(\mathbf{H})(l)$ such that $\delta(\gamma) = \nu(\gamma)\eta(\gamma)$ for all $\gamma \in \Gamma$. By a standard descent argument (e.g.~using \cite[Lemma 2.2]{KammeyerKionke}), it suffices to verify this condition for $\ell = \bbC$.

 The \emph{Margulis superrigidity theorem} \cite{Margulis:discrete-subgroups}*{Theorem~(C), p.\,259} asserts that $\mathbf{G}$ is $S$-algebraically superrigid if $\sum_{v \in S} \rank_{k_v} \mathbf{G} \ge 2$.  We outline in Appendix~\ref{appendix:s-adelic-superrigidity} that the \emph{adelic superrigidity} theorem that the first two authors concluded from this property in~\cite{KammeyerKionke}*{Theorem~3.2} extends effortlessly to $S$-algebraically superrigid groups: if $l$ is a number field, then also homomorphisms $\Gamma \rightarrow \mathbf{H}(\mathbb{A}_{l,T})$ from an $S$-arithmetic subgroup $\Gamma$ of $\mathbf{G}$ to the $T$-adele points of $\mathbf{H}$ extend to a homomorphism of group schemes over the adele points.  This theorem will be key in our argument.  Two number fields are called \emph{arithmetically equivalent} over $\Q$ if almost all rational primes have the same (unordered) tuple of inertia degrees in both number fields.  The \emph{congruence hull} $\Gamma^c$ of an $S$-arithmetic group $\Gamma$ is the smallest congruence subgroup containing~$\Gamma$.
 
\begin{proposition}\label{prop:S-arithmetic-volume-base}
Let $\mathbf{G}$ and $\mathbf{H}$ be simply-connected absolutely almost simple groups defined over number fields $k$ and $l$.   Let $\Gamma \subseteq \mathbf{G}(k)$ be an $S$-arithmetic and let $\Delta \subseteq \mathbf{H}(l)$ be a $T$-arithmetic group such that there exists an isomorphism $\Psi \colon \widehat{\Gamma} \rightarrow \widehat{\Delta}$.  Assume that $\mathbf{G}$ and $\mathbf{H}$ are $S$- and $T$-algebraically superrigid, respectively.  Then the following hold:
\begin{enumerate}
 \item \label{it:arithmetically-equivalent} The fields $k$ and $l$ are arithmetically equivalent over $\bbQ$.
 \item\label{it:finite-places} $S$ contains a finite place if and only if $T$ contains a finite place.
 \item There is an open normal subgroup $U \subseteq \widehat{\Gamma}$ such that the congruence hulls $\Gamma_0^c$ of $\Gamma_0 := U \cap \Gamma$ and $\Delta_0^c$ of $\Delta_0 = \Psi(U) \cap \Delta$
have equal Killing covolumes in $\prod_{v \in S} \mathbf{G}(k_v)$ and resp. $\prod_{w\in T} \mathbf{H}(l_w)$.
\end{enumerate}
\end{proposition}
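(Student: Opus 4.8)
The plan is to run the $S$-arithmetic adelic superrigidity theorem of Appendix~\ref{appendix:s-adelic-superrigidity} against $\Psi$ and against $\Psi^{-1}$. Write $\bbA_{k,S}$ for the ring of $S$-adeles of $k$, i.e.\ the restricted product of the completions $k_v$ over the finite places $v\notin S$. Since $\mathbf{G}$ is simply-connected and noncompact at some place of $S$ (because $\sum_{v\in S}\rank_{k_v}\mathbf{G}\ge 2$), strong approximation identifies the congruence completion $\overline{\Gamma}$ with a compact open subgroup of $\mathbf{G}(\bbA_{k,S})$ commensurable with $\prod_{v\notin S}\mathbf{G}(\mathcal{O}_v)$, and likewise $\overline{\Delta}$ with a compact open subgroup of $\mathbf{H}(\bbA_{l,T})$; in particular $\mathbf{G}(k)$, and also $\mathbf{H}(l)$, is dense in the respective adelic group. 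Composing with the canonical surjections $\widehat{\Gamma}\twoheadrightarrow\overline{\Gamma}$ and $\widehat{\Delta}\twoheadrightarrow\overline{\Delta}$ gives a homomorphism $f\colon\Gamma\hookrightarrow\widehat{\Gamma}\xrightarrow{\Psi}\widehat{\Delta}\twoheadrightarrow\overline{\Delta}\subseteq\mathbf{H}(\bbA_{l,T})$ with dense, hence bounded and coordinatewise Zariski dense, image, and dually a homomorphism $g\colon\Delta\to\overline{\Gamma}\subseteq\mathbf{G}(\bbA_{k,S})$. Feeding $f$ into adelic superrigidity, it extends to a morphism of adelic group schemes which coordinatewise furnishes, for each finite place $w\notin T$, a finite place $v(w)\notin S$ of $k$ — here $v(w)\notin S$ since otherwise the image of $\mathcal{O}_{k,S}$, and thus the bounded set $f(\Gamma)$, would be unbounded in $\mathbf{H}(l_w)$ — a continuous field embedding $k_{v(w)}\hookrightarrow l_w$, an isomorphism ${}^{\sigma_w}\mathbf{G}\xrightarrow{\ \sim\ }\mathbf{H}$ over $l_w$ (an isomorphism because $\mathbf{G}$ and $\mathbf{H}$ are both simply-connected and absolutely almost simple and the morphism is onto), and a central correction $\nu_w\colon\Gamma\to Z(\mathbf{H})(l_w)$. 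Doing the same with $g$ and invoking the uniqueness clause, the two extensions compose to the identity on each side; hence $w\mapsto v(w)$ is a bijection $\theta$ from the finite places of $l$ outside $T$ onto the finite places of $k$ outside $S$ with $k_{\theta(w)}\cong l_w$ as (nonarchimedean) local fields, and the assembled map is a topological isomorphism $\tilde{f}\colon\mathbf{G}(\bbA_{k,S})\xrightarrow{\ \sim\ }\mathbf{H}(\bbA_{l,T})$ restricting to an isomorphism $\overline{\Gamma}\xrightarrow{\ \sim\ }\overline{\Delta}$.

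Granting this, item~\eqref{it:arithmetically-equivalent} is immediate: for all but finitely many rational primes $p$, the bijection $\theta$ matches the places of $k$ above $p$ with those of $l$ above $p$ and identifies the completions, so the inertia tuples agree. For item~\eqref{it:finite-places}, suppose $T$ contains no finite place; then $\theta$ identifies \emph{every} finite place of $l$ above a given prime $p$ with a finite place of $k$ above $p$ lying outside $S$, preserving local degrees, so
\[
\textstyle\sum_{v\mid p,\,v\notin S}[k_v:\bbQ_p]=\sum_{w\mid p}[l_w:\bbQ_p]=[l:\bbQ]=[k:\bbQ]=\sum_{v\mid p}[k_v:\bbQ_p],
\]
the middle equality being arithmetic equivalence; therefore $S$ has no finite place above $p$, and since $p$ is arbitrary $S$ has no finite place, whence the claim by symmetry.

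For the last item I would choose an open normal $U\trianglelefteq\widehat{\Gamma}$ small enough that the central corrections coming from $f$ and from $g$ restrict trivially to $\Gamma_0=U\cap\Gamma$ and to $\Delta_0=\Psi(U)\cap\Delta$ — possible because they factor through the finite central groups $Z(\mathbf{H})$ and $Z(\mathbf{G})$. Then $\tilde f$ agrees with $f$ on $\Gamma_0$, so $\tilde f(\overline{\Gamma_0})=\overline{\Delta_0}$ (closures taken in the adelic groups), and $\overline{\Gamma_0}=\overline{\Gamma_0^c}$, $\overline{\Delta_0}=\overline{\Delta_0^c}$ because the congruence closure of an $S$-arithmetic group coincides with that of its congruence hull. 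Since the Killing measure is built from the canonical Killing form and the $\mathcal{O}_v$-normalized Haar measures — both transported to their analogues by any isomorphism of absolutely almost simple groups over an isomorphism of nonarchimedean local fields — the isomorphism $\tilde f$ preserves the product Killing measure, giving $\mu^\dagger(\overline{\Gamma_0^c})=\mu^\dagger(\overline{\Delta_0^c})$. Finally, using that the Tamagawa number of the simply-connected group $\mathbf{G}$ equals $1$, together with Lemma~\ref{lem:tamagawa-to-killing}, strong approximation, and the decomposition $\mathbf{G}(\bbA_k)=\prod_{v\in S}\mathbf{G}(k_v)\times\mathbf{G}(\bbA_{k,S})$, one obtains
\[
\mu^\dagger\Bigl(\textstyle\prod_{v\in S}\mathbf{G}(k_v)/\Gamma_0^c\Bigr)=|d_k|^{d/2}\,\mu^\dagger(\overline{\Gamma_0^c})^{-1},\qquad d=\dim\mathbf{G},
\]
and likewise for $\Delta_0^c$; as $\dim\mathbf{H}=\dim\mathbf{G}$ (the groups have the same type, being isomorphic over a common field) and $|d_l|=|d_k|$ (arithmetically equivalent number fields have equal discriminant), the two Killing covolumes coincide. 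The main obstacle is the first step: applying the $S$-arithmetic adelic superrigidity theorem so as to obtain $\tilde f$ over an isomorphism of adele rings, which requires checking the coordinatewise Zariski-density hypothesis, tracking and killing the central twists, and pinning down the correct source adele ring — all the more delicate as $k$ and $l$ are a priori non-isomorphic. Once $\tilde f$ is in hand, items~\eqref{it:arithmetically-equivalent} and~\eqref{it:finite-places} are place-counting, and the last item is the Tamagawa-number computation above.
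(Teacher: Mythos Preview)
Your proposal is correct and follows essentially the same route as the paper: apply $S$-adelic superrigidity (Theorem~\ref{thm:adelic-rigidity-s-arithmetic}) to $\Psi$ and to $\Psi^{-1}$ to obtain an isomorphism of adele rings $\mathbb{A}_{k,S}\cong\mathbb{A}_{l,T}$ together with a group-scheme isomorphism $\eta$, read off arithmetic equivalence and item~\eqref{it:finite-places} from this, kill the central twists by passing to a small enough open normal $U$, and then compute the Killing covolume of the congruence hull via Tamagawa number one, Lemma~\ref{lem:tamagawa-to-killing}, and strong approximation, using that $\eta$ preserves Killing measures and that arithmetically equivalent fields share discriminant and degree.

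One small point: you justify strong approximation by $\sum_{v\in S}\rank_{k_v}\mathbf{G}\ge 2$, but this is not among the hypotheses of the proposition---only $S$-algebraic superrigidity is assumed, and the proposition is later applied to rank-one groups of type $F_4$ (Corollary~\ref{cor:F4}). What is actually needed, and implicitly used in the paper as well, is that $\mathbf{G}$ is $k_v$-isotropic at some $v\in S$; this holds in every application but is not a formal consequence of $S$-algebraic superrigidity.
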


\begin{proof}
Let $\Psi \colon \widehat{\Gamma} \to \widehat{\Delta}$ be an isomorphism.  Using Theorem~\ref{thm:adelic-rigidity-s-arithmetic} and the existence of $\Psi$ and its inverse as in the proof of Theorem~3.4 in \cite{KammeyerKionke}, we deduce that there is an isomorphism $j_1 \colon \mathbb{A}_{l, T} \to \mathbb{A}_{k,S}$ of adele rings and an isomorphism of group schemes $\eta_1 \colon \alg{G} \times_k \mathbb{A}_{k,S} \to \alg{H} \times_l \mathbb{A}_{l,T}$ over $j_1$ and an open normal subgroup $U \normal_o \widehat{\Gamma}$ such that
 \begin{equation*}
      \xymatrix{
      	U \ar[d]_{q_\alg{G}}\ar[r]^{\cong}_{\Psi} & \Psi(U) \ar[d]_{q_{\alg{H}}}\\
        \alg{G}(\mathbb{A}_{k, S}) \ar[r]^{\cong}_{\eta_1} & \alg{H}(\mathbb{A}_{l, T}),
        }
\end{equation*}
commutes. Here $U$ is any open normal subgroup contained in the kernels of the homomorphisms $\hat{\nu}_1 \colon \widehat{\Gamma} \to Z(\mathbf{H})(\mathbb{A}_{l, T})$ and $\hat{\nu}_2\circ \Psi \colon \widehat{\Gamma} \to Z(\alg{G})(\mathbb{A}_{k, S})$ induced on profinite completions by the homomorphisms $\nu_1$ and $\nu_2$ coming from Theorem~\ref{thm:adelic-rigidity-s-arithmetic}.  We write $\widetilde{U} =q_{\alg{G}}(U)$.

The existence of the isomorphism $\mathbb{A}_{l, T} \to \mathbb{A}_{k,S}$ implies that almost all rational primes have the same decomposition type in $k$ and $l$, which gives \eqref{it:arithmetically-equivalent}, and that $S$ contains a place over $p$ if and only if $T$ does, which shows~\eqref{it:finite-places}.

Let $\Gamma_0^c$ and $\Delta_0^c$ denote the congruence hulls of $\Gamma_0$ and $\Delta_0$, respectively, so $\Gamma_0^c = \mathbf{G}(k) \cap \widetilde{U}$ and $\Delta_0^c = \mathbf{H}(l) \cap \eta_1(\widetilde{U})$.  Kottwitz proved \cite{Kottwitz} that the Tamagawa number $\tau$ equals $1$ for simply-connected semisimple groups (the missing Hasse principle for $E_8$ was later established by Chernousov~\cite{Chernousov}).  Hence, Lemma \ref{lem:tamagawa-to-killing} implies that the Killing covolume of $\mathbf{G}(k)$ in $\mathbf{G}(\bbA_k)$ is $|d_k|^{\frac{\dim(\mathbf{G})}{2}}$.

Let $\mathcal{F}$ be a measurable fundamental domain for the action of $\Gamma^c_0$ on $\prod_{v\in S} \mathbf{G}(k_v)$.  Then it is straightforward to check using strong approximation that $\mathcal{F} \times \widetilde{U}$ is a measurable fundamental domain for the action of $\mathbf{G}(k)$ on $\mathbf{G}(\bbA_k)$ (cf.~\cite[\S 4.2]{Ono:algebraicgroups}). In turn, we obtain
\[
	\vol_{\mu^\dagger_S}\left(\prod_{v \in S} \mathbf{G}(k_v)/\Gamma_0^c\right) = |d_k|^{\frac{\dim(\mathbf{G})}{2}} \vol_{\mu^\dagger_{V(k) \setminus S}}(\widetilde{U})^{-1}
\]
where $\mu^\dagger_S = \prod_{v \in S} \mu^\dagger_{\mathbf{G}(k_v)}$ and $\mu^\dagger_{V(k) \setminus S} = \prod_{v \notin S} \mu^\dagger_{\mathbf{G}(k_v)}$.  The same argument gives a formula for the Killing covolume of $\Delta_0^c$ in $\prod_{w \in T} \mathbf{H}(l_w)$ as a product of $|d_l|^{\frac{\dim(\mathbf{H})}{2}}$ and $\vol_{\mu^\dagger_{V(l) \setminus T}}(q_{\alg{H}}(\Psi(U)))^{-1}$.  It follows (for instance) from adelic superrigidity, that $\dim(\mathbf{G}) = \dim(\mathbf{H})$.  The number fields $l$ and $k$ are arithmetically equivalent over $\bbQ$, and thus have the same discriminant; see \cite[Theorem (1.4)]{Klingen:similarities}.  Since an isomorphism of Lie algebras maps the Killing forms to one another, the isomorphim $\eta_1$ preserves the Killing covolume so that
\[
 \vol_{\mu^\dagger_{V(k) \setminus S}}(\widetilde{U}) = \vol_{\mu^\dagger_{V(l) \setminus T}}(\eta_1(\widetilde{U})) = \vol_{\mu^\dagger_{V(l) \setminus T}}(q_{\alg{H}}(\Psi(U))).
 \]
 We deduce that the Killing covolumes of $\Gamma_0^c$ in $\prod_{v\in S} \mathbf{G}(k_v)$ and of $\Delta_0^c$ in $\prod_{w\in T} \mathbf{H}(l_w)$ coincide.
\end{proof}

\begin{corollary}\label{cor:S-arithmetic-volume-H1}
In the setting of Proposition \ref{prop:S-arithmetic-volume-base} assume that 
 \[\Hom(H_1(\Gamma,\bbZ), Z(\mathbf{H})(\bbC)) = \{1\}.\]
Then the Killing covolumes of the congruence hulls $\Gamma^c$ in $\prod_{v \in S} \mathbf{G}(K_v)$ and $\Delta^c$ in $\prod_{w\in T} \mathbf{H}(L_w)$ are equal.
\end{corollary}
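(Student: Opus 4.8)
The plan is to deduce the statement about the full congruence hulls $\Gamma^c$ and $\Delta^c$ from part (3) of Proposition \ref{prop:S-arithmetic-volume-base}, which only gives equality of Killing covolumes for the smaller subgroups $\Gamma_0 = U \cap \Gamma$ and $\Delta_0 = \Psi(U) \cap \Delta$. The extra hypothesis $\Hom(H_1(\Gamma,\bbZ), Z(\mathbf{H})(\bbC)) = \{1\}$ is there precisely to kill the central correction terms $\nu_1,\nu_2$ that appeared in the proof of the proposition: recall that $U$ was only required to lie inside the kernels of $\hat\nu_1$ and $\hat\nu_2 \circ \Psi$. So the first step is to observe that under this vanishing hypothesis, the homomorphism $\nu_1 \colon \Gamma \to Z(\mathbf{H})(\mathbb{A}_{l,T})$ coming from $S$-algebraic superrigidity (and likewise $\nu_2$) is trivial. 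Indeed $\nu_1$ factors through $H_1(\Gamma,\bbZ)$ since $Z(\mathbf{H})$ is abelian, and a homomorphism from $\Gamma$ (equivalently from $\widehat\Gamma$) to the profinite-by-adelic center is controlled by homomorphisms $H_1(\Gamma,\bbZ) \to Z(\mathbf{H})(\bbC)$ after noting that $Z(\mathbf{H})$ is a finite multiplicative-type group, so its $k_v$- and adele-points inject into (a finite power of) $\bbC^\times$ componentwise and the image of a finitely generated group lands in the group of roots of unity; one then checks the hypothesis rules out all of these. Consequently $\hat\nu_1$ and $\hat\nu_2 \circ \Psi$ are trivial, and we may take $U = \widehat\Gamma$ in the proposition.

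With $U = \widehat\Gamma$ we get $\Gamma_0 = \widehat\Gamma \cap \Gamma = \Gamma$ and $\Delta_0 = \Psi(\widehat\Gamma)\cap \Delta = \Delta$, so $\Gamma_0^c = \Gamma^c$ and $\Delta_0^c = \Delta^c$, and part (3) of the proposition directly yields that the Killing covolumes of $\Gamma^c$ in $\prod_{v\in S}\mathbf{G}(k_v)$ and of $\Delta^c$ in $\prod_{w\in T}\mathbf{H}(l_w)$ coincide. (The use of the letters $K_v$, $L_w$ in the corollary statement should be read as $k_v$, $l_w$; this is a notational slip.) It is worth remarking that one does not even need to re-run the fundamental-domain computation: the commuting square in the proof of the proposition, now valid with $U = \widehat\Gamma$, identifies $q_{\mathbf{G}}(\widehat\Gamma)$ with $q_{\mathbf{H}}(\Psi(\widehat\Gamma))$ via the Killing-form-preserving isomorphism $\eta_1$, and $q_{\mathbf{G}}(\widehat\Gamma)$ is exactly the closure of $\Gamma$ in the congruence topology, whose complement-volume is inverse to the covolume of $\Gamma^c$.

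The only genuine point requiring care — the ``main obstacle'', such as it is — is the first step: verifying that the vanishing of $\Hom(H_1(\Gamma,\bbZ), Z(\mathbf{H})(\bbC))$ really forces $\nu_1$ and $\nu_2$ to be trivial, rather than merely trivial after restriction to some finite-index subgroup. This hinges on the structure of $Z(\mathbf{H})$ as a finite $k$-group of multiplicative type: every homomorphism from the finitely generated group $\Gamma$ into $Z(\mathbf{H})(\mathbb{A}_{l,T})$ has image in a finite subgroup, which embeds into $Z(\mathbf{H})(\overline{\bbQ})$ and hence into $Z(\mathbf{H})(\bbC)$, so the full homomorphism $\Gamma \to Z(\mathbf{H})(\mathbb{A}_{l,T})$ is detected on $H_1(\Gamma,\bbZ)$ by a homomorphism to $Z(\mathbf{H})(\bbC)$; if the latter $\Hom$-group is trivial then so is $\nu_1$ on $\Gamma$, hence $\hat\nu_1$ on $\widehat\Gamma$ by density. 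Everything else is bookkeeping around Proposition \ref{prop:S-arithmetic-volume-base}.
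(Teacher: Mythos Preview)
Your approach is the same as the paper's: use the vanishing hypothesis to force the central corrections $\nu_1,\nu_2$ to be trivial, so that one may take $U = \widehat{\Gamma}$ in Proposition~\ref{prop:S-arithmetic-volume-base} and get $\Gamma_0 = \Gamma$, $\Delta_0 = \Delta$.

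The one point you skip with ``and likewise $\nu_2$'' is made explicit in the paper and is not quite automatic. The homomorphism $\nu_2$ goes from $\Delta$ to $Z(\mathbf{G})(\mathbb{A}_{k,S})$, so the stated hypothesis $\Hom(H_1(\Gamma,\bbZ), Z(\mathbf{H})(\bbC)) = \{1\}$ does not apply to it directly. The paper fills this by observing that $H_1(\Gamma,\bbZ) \cong H_1(\Delta,\bbZ)$ (the abelianization is a profinite invariant, and $\widehat{\Gamma}\cong\widehat{\Delta}$) and that $Z(\mathbf{G})(\bbC) \cong Z(\mathbf{H})(\bbC)$ (since $\mathbf{G}$ and $\mathbf{H}$ become isomorphic over $\bbC$, as follows from adelic superrigidity). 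Hence $\Hom(H_1(\Delta,\bbZ), Z(\mathbf{G})(\bbC))$ also vanishes, and your argument for $\nu_1$ then transfers verbatim to $\nu_2$. With that addition your proof is complete and matches the paper's.
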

\begin{proof}
As the abelianization is a profinite invariant, we have $H_1(\Gamma,\bbZ) \cong H_1(\Delta,\bbZ)$.  Moreover, $\alg{G}$ and $\alg{H}$ are isomorphic over $\bbC$, so that  $Z(\alg{G})(\bbC) \cong Z(\alg{H})(\bbC)$.
This shows that if $\Hom(H_1(\Gamma,\bbZ), Z(\mathbf{H})(\bbC))$ is trivial, then one can choose $U = \widehat{\Gamma}$ in the proof of Proposition \ref{prop:S-arithmetic-volume-base} (compare to the proof of \cite[Theorem 3.4]{KammeyerKionke}). Hence $\Gamma = \Gamma_0$ and $\Delta = \Delta_0$ and the assertion follows.
\end{proof}

In particular, this result applies to groups of type $F_4$ because they have trivial center.

\begin{corollary}\label{cor:F4}
In the setting of~\ref{prop:S-arithmetic-volume-base}, assume $\mathbf{G}$ and $\mathbf{H}$ have type~$F_4$.  Then the Killing covolumes of the congruence hulls $\Gamma^c$ in $\prod_{v \in S} \mathbf{G}(K_v)$ and $\Delta^c$ in $\prod_{w\in T} \mathbf{H}(L_w)$ are equal.
\end{corollary}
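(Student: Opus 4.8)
The plan is to recognise that this is an immediate specialisation of Corollary~\ref{cor:S-arithmetic-volume-H1}. The only input specific to type $F_4$ is that $F_4$ is one of the Dynkin types (together with $G_2$ and $E_8$) for which the root lattice coincides with the weight lattice; equivalently, a simply-connected absolutely almost simple group of type $F_4$ is at the same time adjoint, so its scheme-theoretic centre is trivial. In particular $Z(\mathbf{H})(\bbC) = \{1\}$, so that the abelian group $\Hom(H_1(\Gamma,\bbZ), Z(\mathbf{H})(\bbC))$ is trivial and the hypothesis of Corollary~\ref{cor:S-arithmetic-volume-H1} is automatically satisfied.

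With that remark in place, the argument is a one-line deduction: invoke Corollary~\ref{cor:S-arithmetic-volume-H1} with the given data $\mathbf{G}$, $\mathbf{H}$, $\Gamma$, $\Delta$, $\Psi$, and conclude that the Killing covolumes of $\Gamma^c$ in $\prod_{v \in S} \mathbf{G}(K_v)$ and of $\Delta^c$ in $\prod_{w \in T} \mathbf{H}(L_w)$ agree. There is no genuine obstacle: all of the real content (adelic superrigidity, the Tamagawa number~$1$ computation, arithmetic equivalence of $k$ and $l$, and the reduction to $U = \widehat{\Gamma}$ when the centre is well-behaved) has already been packaged into Proposition~\ref{prop:S-arithmetic-volume-base} and Corollary~\ref{cor:S-arithmetic-volume-H1}. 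The one thing worth checking in passing is that the ambient hypotheses of Proposition~\ref{prop:S-arithmetic-volume-base} — simply-connectedness and absolute almost simplicity of $\mathbf{G}$ and $\mathbf{H}$, together with their $S$- resp. $T$-algebraic superrigidity — are still being assumed here; but these are exactly what it means to be ``in the setting of~\ref{prop:S-arithmetic-volume-base}'', so nothing further needs to be verified.
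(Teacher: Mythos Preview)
Your proof is correct and follows exactly the paper's approach: the corollary is stated immediately after the remark that groups of type $F_4$ have trivial center, so Corollary~\ref{cor:S-arithmetic-volume-H1} applies directly. Your additional explanation of why $F_4$ has trivial center (root lattice equals weight lattice, hence simply-connected equals adjoint) is a helpful elaboration but not needed beyond what the paper asserts.
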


Theorem~\ref{thm:octonionic} is the special case when $k$ and $l$ are totally real, $S$ and $T$ consist of the infinite places only, $\mathbf{G}$ and $\mathbf{H}$ are type $F_4$ forms which are isomorphic to $F_{4(-20)}$ at one real place and anisotropic at all other real places, and $\Gamma$ and $\Delta$ are congruence subgroups.  Note that $\mathbf{G}$ and $\mathbf{H}$ are $S$- and $T$-algebraically superrigid even though they have rank one.
 As mentioned in the first paragraph of this section, superrigidity over $\mathbb{C}$ entails algebraic superrigidity. This can be deduced from the work of Corlette~\cite{Corlette:archimedean-superrigidity} and Gromov-Schoen~\cite{GromovSchoen} following along the lines of Margulis \cite{Margulis:discrete-subgroups}.
 
\medskip

In the proof of the next result, it will become apparent why renormalization of Killing measures is necessary.

\begin{theorem} \label{thm:S-arithmetic-main}
Let $\mathbf{G}$ and $\mathbf{H}$ be simply-connected absolutely almost simple algebraic groups defined over number fields $k$ and $l$.

Let $\Gamma \subseteq \mathbf{G}(k)$ be $S$-arithmetic and let $\Delta \subseteq \mathbf{H}(l)$ be $T$-arithmetic with $\widehat{\Gamma} \cong \widehat{\Delta}$.  Assume $\mathbf{G}$ and $\mathbf{H}$ are algebraically superrigid and have CSP* with respect to $S$ and $T$, respectively.  Then the renormalized Killing covolumes of $\Gamma$ in $\prod_{v \in S} \mathbf{G}(k_v)$ and $\Delta$ in $\prod_{w\in T} \mathbf{H}(l_w)$ are equal.
\end{theorem}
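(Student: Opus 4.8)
The plan is to reduce the statement to Proposition~\ref{prop:S-arithmetic-volume-base} by bookkeeping (a) how the renormalized Killing measure differs from the Killing measure, and (b) how the congruence hulls $\Gamma_0^c,\Delta_0^c$ furnished by that proposition differ from $\Gamma,\Delta$. Write $G=\prod_{v\in S}\mathbf{G}(k_v)$ and $H=\prod_{w\in T}\mathbf{H}(l_w)$ and keep the notation $\delta,c_\alpha$ of Definition~\ref{def:renormalized-measure}. Since Killing and renormalized Killing covolumes both scale by the index under passage to a finite-index subgroup, and that index is a profinite invariant, I would first replace $\Gamma,\Delta$ by compatible finite-index subgroups; this also lets me assume $\Gamma\subseteq\mathbf{G}(\mathcal{O}_{k,S})$, $\Delta\subseteq\mathbf{H}(\mathcal{O}_{l,T})$, and, as CSP holds, that the congruence hull index is governed by the finite group $C(\mathbf{G},S)$ (resp. $C(\mathbf{H},T)$); for the metaplectic calculation below I may also assume $\mathbf{G}$ isotropic at every finite place of $S$ (and $\mathbf{H}$ at every finite place of $T$), which is the situation of Theorem~\ref{thm:metaplectic} and which costs nothing in the applications to Theorems~\ref{thm:main-lattices} and~\ref{thm:non-uniform}. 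Now apply Proposition~\ref{prop:S-arithmetic-volume-base} (legitimate since $\mathbf{G},\mathbf{H}$ are algebraically superrigid): it gives an isomorphism $\Psi\colon\widehat{\Gamma}\to\widehat{\Delta}$ and an open normal subgroup $U\trianglelefteq\widehat{\Gamma}$ that may be shrunk at will while staying inside the kernels of the central homomorphisms $\hat{\nu}_1,\hat{\nu}_2\circ\Psi$; using finiteness of $C(\mathbf{G},S)$ I shrink $U$ so that $U\cap C(\mathbf{G},S)=\{1\}$ and $\Psi(U)\cap C(\mathbf{H},T)=\{1\}$. With $\Gamma_0=U\cap\Gamma$ and $\Delta_0=\Psi(U)\cap\Delta$, the proposition yields equal Killing covolumes of $\Gamma_0^c$ in $G$ and of $\Delta_0^c$ in $H$, and its proof records in addition that $k$ and $l$ are arithmetically equivalent over $\mathbb{Q}$, that $S$ contains a finite place iff $T$ does, and that $\mathbf{G}$ and $\mathbf{H}$ become isomorphic over $\mathbb{C}$.

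Next I would pass from $\Gamma_0^c,\Delta_0^c$ back to $\Gamma,\Delta$. We have $[\Gamma:\Gamma_0]=[\widehat{\Gamma}:U]=[\widehat{\Delta}:\Psi(U)]=[\Delta:\Delta_0]$, and the standard comparison of the arithmetic and congruence completions identifies $\Gamma_0^c/\Gamma_0$ with $C(\mathbf{G},S)/(C(\mathbf{G},S)\cap U)$, so the choice of $U$ gives $[\Gamma_0^c:\Gamma_0]=|C(\mathbf{G},S)|$ and likewise $[\Delta_0^c:\Delta_0]=|C(\mathbf{H},T)|$. Plugging the definition of $\mu^\diamond_G$ into $\vol_{\mu^\diamond_G}(G/\Gamma)=\tfrac{[\Gamma_0^c:\Gamma_0]}{[\Gamma:\Gamma_0]}\vol_{\mu^\diamond_G}(G/\Gamma_0^c)$, doing the same for $\Delta$, and cancelling the equal Killing covolumes of $\Gamma_0^c$ and $\Delta_0^c$ together with the equal indices $[\Gamma:\Gamma_0]=[\Delta:\Delta_0]$, the asserted equality reduces to
\[
  \delta_G\,|C(\mathbf{G},S)|\prod_{v\in S}c_v^{-1}=\delta_H\,|C(\mathbf{H},T)|\prod_{w\in T}c_w^{-1}.
\]
In this identity $c_v=1$ at every finite place, while at an archimedean place $c_v$ is the Killing volume of the compact form of $\mathbf{G}$, which depends only on the absolute type of $\mathbf{G}$; since $\mathbf{G}$ and $\mathbf{H}$ share that type and since arithmetically equivalent fields have the same signature, hence the same number of archimedean places, the two $c$-products are equal and cancel. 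So it suffices to prove $\delta_G|C(\mathbf{G},S)|=\delta_H|C(\mathbf{H},T)|$.

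Now I would bring in CSP* and Theorem~\ref{thm:metaplectic}. By CSP*, $|C(\mathbf{G},S)|=|M(\mathbf{G},S)|$ and $|C(\mathbf{H},T)|=|M(\mathbf{H},T)|$, and, CSP* also providing Conjecture~(U) in the special case, Theorem~\ref{thm:metaplectic} applies: $M(\mathbf{G},S)\cong\mu(k)$ if $S$ consists of the infinite places only and $\mathbf{G}$ is topologically simply-connected at every real place, and $M(\mathbf{G},S)$ is trivial otherwise; likewise for $\mathbf{H}$. If $S$ contains a finite place then so does $T$, both metaplectic kernels are trivial, $\delta_G=\delta_H=1$, and the identity holds. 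If $S,T$ consist of infinite places only, then, inspecting the definition of $\delta$, one has $\delta_G|M(\mathbf{G},S)|=|\mu(k)|$ when $\mathbf{G}$ is topologically simply-connected at all real places (then $\delta_G=1$), and $\delta_G|M(\mathbf{G},S)|=2$ otherwise (then $M(\mathbf{G},S)=1$ and $\delta_G=2$, there being a real place at which $\mathbf{G}(k_v)$ is not simply-connected), and symmetrically for $\mathbf{H},l$. Finally, arithmetically equivalent fields have the same group of roots of unity, so $|\mu(k)|=|\mu(l)|$, and a number field with more than two roots of unity is totally imaginary: thus if $|\mu(k)|>2$ then $k$ and $l$ have no real places and both sides equal $|\mu(k)|=|\mu(l)|$, while if $|\mu(k)|=|\mu(l)|=2$ both sides equal $2$ regardless of simple-connectedness. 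This establishes the identity, hence the theorem.

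The step I expect to be the main obstacle is this last reconciliation: one must show that the seemingly arbitrary shape of the renormalization --- the factor $\delta\in\{1,2\}$ and the compact-form normalizers $c_v$ --- is forced, and this rests on the non-formal observation that the metaplectic contribution $\mu(k)$ and the correction $\delta=2$ can never interfere across a profinite isomorphism, simply because $|\mu(k)|>2$ already implies that $k$ is totally imaginary and so carries no real place to trigger $\delta=2$. A secondary technical point is to ensure that the congruence-hull index $[\Gamma_0^c:\Gamma_0]$ genuinely realizes the full order $|C(\mathbf{G},S)|$ rather than a proper divisor, which is why one first passes to an arithmetic subgroup inside $\mathbf{G}(\mathcal{O}_{k,S})$ and then takes $U$ deep enough.
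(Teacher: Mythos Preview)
Your proposal is correct and follows essentially the same route as the paper: apply Proposition~\ref{prop:S-arithmetic-volume-base}, shrink $U$ so that $U\cap C(\mathbf{G},S)=\{1\}$ and $\Psi(U)\cap C(\mathbf{H},T)=\{1\}$, reduce to the identity $\delta_G|M(\mathbf{G},S)|=\delta_H|M(\mathbf{H},T)|$, and verify it case by case via Theorem~\ref{thm:metaplectic} and arithmetic equivalence. Your preliminary reductions (passing into $\mathbf{G}(\mathcal{O}_{k,S})$, etc.) are unnecessary since Proposition~\ref{prop:S-arithmetic-volume-base} already treats arbitrary $S$-arithmetic subgroups, and your final case split by $|\mu(k)|>2$ versus $|\mu(k)|=2$ is equivalent to the paper's split into ``$k$ totally imaginary'' versus ``$k$ has a real place''; you are also right to flag the isotropy hypothesis at finite places of $S$ needed for Theorem~\ref{thm:metaplectic}, which the paper invokes tacitly.
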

\begin{proof}
As before, we denote by $\Psi \colon \widehat{\Gamma} \to \widehat{\Delta}$ a fixed isomorphism.  We may apply Proposition \ref{prop:S-arithmetic-volume-base} to find a suitable open normal subgroup $U \trianglelefteq \widehat{\Gamma}$.  We may shrink $U$ to achieve that $U \cap C(\alg{G},S) = \{1\}$ and $\Psi(U)\cap C(\alg{H},T) = \{1\}$.
As usual, we define $\Gamma_0 = \Gamma \cap U$ and $\Delta_0 = \Delta \cap \Psi(U)$.
As
\[
	|\Gamma:\Gamma_0| = |\widehat{\Gamma}:U| = |\widehat{\Delta}:\Psi(U)| = |\Delta:\Delta_0|
\]
 it is sufficient to show that
the subgroups $\Gamma_0$ and $\Delta_0$ have the same renormalized Killing covolumes.
Since $U \cap C(\alg{G},S) = \{1\}$, we have
\[|\Gamma_0^c:\Gamma_0| = |C(\alg{G},S)| = |M(\alg{G}, S)|\]
and similarly $|\Delta_0^c:\Delta_0| = |M(\alg{H}, T)|$.
Proposition~\ref{prop:S-arithmetic-volume-base} gives that the Killing covolumes of $\Delta_0^c$ and $\Gamma_0^c$ agree, that $S$ contains a finite place if and only if $T$ does, and that $k$ and $l$ are arithmetically equivalent.  In particular $[k:\bbQ] = [l:\bbQ]$, and $k$ and $l$ have the same number of real and complex places; see \cite[Theorem (1.4)]{Klingen:similarities}.  Therefore, the rescaling of the Killing volume by the Killing volumes of the compact real forms is the same for $k$ and $l$ and can be disregarded.  Moreover, if $S$ and $T$ contain a finite place, then Theorem~\ref{thm:metaplectic} implies $|M(\alg{G}, S)|= |M(\alg{H},T)| = 1$ and the assertion follows.

So suppose now that $S$ and $T$ consist of the archimedean places of $k$ and~$l$ only. 
As we just said, $k$ is totally imaginary if and only if $l$ is.  In this case $|M(\alg{G}, S)|= |\mu(k)| = |\mu(l)| = |M(\alg{H}, T)|$ because arithmetically equivalent fields contain the same roots of unity; see \cite[Theorem (1.4)]{Klingen:similarities}.

So finally assume that $k$ and $l$ have at least one real place.  By Theorem~\ref{thm:metaplectic}, we have $|M(\alg{G}, S)| = 1$ if and only if $\alg{G}(k_v)$ is not topologically simply-connected at some real place $v$ and then the factor $\delta_G$ in the renormalized Killing measure is given by $\delta_G = 2$.  If on the other hand $\alg{G}(k_v)$ is topologically simply-connected at all real places $v$, then $|M(\alg{G}, S)| = 2$ and $\delta_G = 1$.  So in both cases $\delta_G \cdot |M(\alg{G}, S)| = 2$ and the same applies to $\mathbf{H}$.  It follows that the renormalized Killing covolumes of $\Gamma_0$ and $\Delta_0$ agree.
\end{proof}

\begin{example}
  As an addendum to the last proof, we give examples of \(\mathbf{G}\) and \(\mathbf{H}\) whose metaplectic kernels are different to show that the factor \(\delta\) in the renormalization of Killing measures cannot be ommitted.  In fact, we can use M.\,Aka's examples of spinor groups which exhibit that Kazhdan's property (T) is not a profinite property~\cite{Aka:property-t}*{Theorem~1}.

  Consider the quadratic forms
  \[ f = \langle 1, 1, 1, 1, 1, 1, -1 \rangle \quad \text{and} \quad g = \langle 1, 1, -1, -1, -1, -1, -1 \rangle \]
  and let \(\mathbf{G} = \mathbf{Spin}(f)\) and \(\mathbf{H} = \mathbf{Spin}(g)\), both considered as groups over the number field \(\Q(\sqrt{2})\).  Then \(\mathbf{G}\) is topologically simply-connected at both real places because the maximal compact subgroup \(\operatorname{Spin}(6)\) is a simply-connected deformation retract.  In contrast, \(\mathbf{H}\) is not topologically simply-connected at the real places because at these places, \(\mathbf{H}\) is a twofold covering of \(\operatorname{SO}^0(2,5)\) whose fundamental group is infinite.  For \(S\) consisting of the real places of \(\Q(\sqrt{2})\), Theorem~\ref{thm:metaplectic} gives that \(M(\mathbf{G}, S)\) has order two whereas \(M(\mathbf{H}, S)\) is trivial.  Both \(\mathbf{G}\) and \(\mathbf{H}\) are algebraically superrigid and have CSP* by Theorem~\ref{thm:status-of-csp}.

  Since the quadratic forms \(\langle 1, 1, 1, 1 \rangle\) and \(\langle -1, -1, -1, -1 \rangle\) are isometric over \(\Z_p\) for all primes \(p\), we see that the congruence completions of the arithmetic groups \(\mathbf{G}(\Z[\sqrt{2}])\) and \(\mathbf{H}(\Z[\sqrt{2}])\) (defined as in~\cite{Kammeyer-Sauer:spinor}*{Section~3}) are isomorphic.  It follows that suitable finite index subgroups \(\Gamma\) and \(\Delta\) of these groups have isomorphic profinite completions.
\end{example}
  
\begin{proof}[Proof of Theorem \ref{thm:main-lattices}]
  As we explained below Definition~\ref{def:simply-connected}, we have
  \[ G \cong \prod_{v \in S^{\text{is}}} \mathbf{G}(k_v) \quad \text{and} \quad H \cong \prod_{v \in T^{\text{is}}} \mathbf{H}(l_v) \]
  for simply-connected absolutely almost simple linear algebraic groups $\mathbf{G}$ and $\mathbf{H}$ over $k$ and $l$, respectively, and for finite sets of places $S^{\text{is}}$ and $T^{\text{is}}$ of $k$ and $l$ containing all the infinite places at which $\mathbf{G}$ and $\mathbf{H}$ are isotropic, respectively.  Let $S$ and $T$ be the union of $S^{\text{is}}$ and $T^{\text{is}}$ with all infinite places, respectively.  The group $\Gamma$ is commensurable with $\alg{G}(\mathcal{O}_{k,S})$ and $\Delta$ is commensurable with $\alg{H}(\mathcal{O}_{l,T})$.

Pick $\Gamma_0 \le \Gamma$ of finite index such that $\Gamma_0 \subset \mathbf{G}(\mathcal{O}_{k,S})$ and such that $\Gamma_0$ is torsion-free (Selberg's lemma) and pick $\Delta_0 \le \Delta$ similarly.  Then the open subgroup $U = \overline{\Gamma_0} \cap \overline{\Delta_0}$ of $\widehat{\Gamma} \cong \widehat{\Delta}$ gives torsion-free subgroups $\Gamma_1 = U \cap \Gamma \le \alg{G}(k)$ and $\Delta_1 = U \cap \Delta \le \alg{H}(l)$ such that $[\Gamma : \Gamma_1 ] = [\Delta : \Delta_1]$ and $\widehat{\Gamma_1} \cong \widehat{\Delta_1}$.  The assumption of CSP* gives $|C(\mathbf{G},S)| = |M(\mathbf{G}, S)|$ and similarly for~$\mathbf{H}$.  

Since neither $G$ nor $H$ has compact factors, it follows that neither $S^{\text{is}}$ nor $T^{\text{is}}$ contain infinite places at which $\mathbf{G}$ or $\mathbf{H}$ would be anisotropic, respectively.  As $\Gamma_1$ and $\Delta_1$ are torsion-free, Lemma~\ref{lem:forget-compact-factors} shows that the renormalized Killing covolume of $\Gamma_1 \le G$ and $\Delta_1 \le H$ is the same as the renormalized Killing covolume of $\Gamma_1 \le  \prod_{v \in S} \alg{G}(k_v)$ and $\Delta_1 \le \prod_{v \in T} \alg{H}(l_v)$, respectively.  As we explained in the beginning of the section, the higher rank assumption on $G$ and $H$ implies that $\mathbf{G}$ and $\mathbf{H}$ are $S$- and $T$-algebraically superrigid.  Now the latter renormalized Killing covolumes are equal by Theorem~\ref{thm:S-arithmetic-main} and this completes the proof.
\end{proof}

\begin{proof}[Proof of Theorem~\ref{thm:same-lie-group}.]
  Since the center \(Z(G)\) of \(G\) is finite, there exist finite index subgroups \(\Gamma_0 \le \Gamma\) and \(\Delta_0 \le \Delta\) intersecting \(Z(G)\) trivially.  Fixing the isomorphism \(\widehat{\Gamma} \cong \widehat{\Delta}\), the intersection of the closures \(\overline{\Gamma_0} \cap \overline{\Delta_0}\) in \(\widehat{\Gamma}\) intersects \(\Gamma\) and \(\Delta\) in finite index subgroups \(\Gamma_1\) and \(\Delta_1\) and we have \([\Gamma \colon \Gamma_1] = [\Delta \colon \Delta_1]\).  Since \(\Gamma_1\) and \(\Delta_1\) intersect \(Z(G)\) trivially, the projection map gives embeddings \(\Gamma_1, \Delta_1 \le \operatorname{Ad} G\) into the adjoint form \(\operatorname{Ad} G = G / Z(G)\).

  The adjoint form can be identified with the group of inner automorphisms \(\operatorname{Int} (\mathfrak{g})\) of the Lie algebra \(\mathfrak{g}\) of \(G\).  As such, it is given by the unit path component of the \(\R\)-group \(\operatorname{Aut}(\mathfrak{g})\).  It also is the unit path component of the Zariski connected \(\R\)-group given by the product of the Zariski unit components of the automorphism groups \(\operatorname{Aut} (\mathfrak{g}_i)\) of the simple ideals of \(\mathfrak{g} \cong \mathfrak{g}_1 \oplus \cdots \oplus \mathfrak{g}_r\).  The non-absolutely simple factors in this product decomposition are given by restriction of scalars from an absolutely simple group over \(\C\) and can thus be replaced by these \(\C\)-groups.  Finally, taking the product of all the algebraically simply-connected coverings of the absolutely simple factors, we obtain an algebraically simply-connected Lie group \(G_0\).  Note that \(G_0\) is path connected because the real points of an algebraically simply connected \(\R\)-group are path connected (the \(\C\)-points of the \(\C\)-factors are even topologically simply-connected).  The preimages \(\Gamma_2\) and \(\Delta_2\) of \(\Gamma_1\) and \(\Delta_1\) along the covering morphism \(p \colon G_0 \rightarrow \operatorname{Ad} G\) are irreducible lattices in \(G_0\).  Moreover, \(G_0\) has no anisotropic factor because \(G\) has no compact factor by assumption and similarly \(G_0\) has higher rank because \(G\) does.

  However, we cannot yet ensure that \(\widehat{\Gamma_2} \cong \widehat{\Delta_2}\).  Therefore we argue similarly as above and let \(\Gamma_3 \le \Gamma_2\) and \(\Delta_3 \le \Delta_2\) be finite index subgroups intersecting \(\ker p\) trivially.  Then \(p\) embeds \(\Gamma_3\) and \(\Delta_3\) as finite index subgroups of \(\Gamma_1\) and \(\Delta_1\).  The intersection \(\overline{\Gamma_3} \cap \overline{\Delta_3}\) in \(\widehat{\Gamma_1} \cong \widehat{\Delta_1}\) intersects \(\Gamma_3\) and \(\Delta_3\) in finite index subgroups \(\Gamma_4\) and \(\Delta_4\) and these are finally the irreducible lattices in \(G_0\) with \(\widehat{\Gamma_4} \cong \widehat{\Delta_4}\) and \([\Gamma : \Gamma_4] = [\Delta : \Delta_4]\).

  If we endow \(G_0\) with the Haar measure induced from \(G\) via \(\operatorname{Ad} G\) and denote it likewise by \(\mu\), then by construction
  \[ \frac{\mu(G/\Gamma)}{\mu(G_0 / \Gamma_4)} = \frac{|Z(G)|}{[\Gamma : \Gamma_4] \cdot |\ker p|} = \frac{|Z(G)|}{[\Delta : \Delta_4] \cdot |\ker p|} = \frac{\mu(G/\Delta)}{\mu(G_0 / \Delta_4)}. \]
  Hence the assertion now follows from Theorem~\ref{thm:main-lattices}.
\end{proof}

Since CSP* is known in the isotropic case (Theorem~\ref{thm:status-of-csp}), Theorem~\ref{thm:non-uniform-same-lie-group} is immediate from Theorem~\ref{thm:same-lie-group}.

\begin{remark}[\emph{On the profinite almost rigidity of \(S\)-arithmetic groups}]
  A theorem of Borel~\cite{Borel:bunded-covolume}*{Theorem~4.2} applies to our situation: For every algebraically simply-connected Lie group \(G\) of higher rank without compact factors and with fixed Haar measure \(\mu\) and for every given bound \(c > 0\), there exist only finitely many conjugacy classes of lattices \(\Gamma \le G\) with \(\mu(G/\Gamma) \le c\).

  Moreover any two profinitely isomorphic lattices in such Lie groups define surrounding algebraic groups by arithmeticity which are isomorphic locally almost everywhere by Theorem~\ref{thm:adelic-rigidity-s-arithmetic}.  There are only finitely many possible isomorphism types over each remaining archimedean or \(\mathfrak{p}\)-adic completion.  Thus Borel's theorem and our Theorem~\ref{thm:main-lattices} have the corollary that only finitely many isomorphism types of irreducible lattices in groups \(G\) as above have the same profinite completion if CSP* is granted.  This recovers a theorem of M.\,Aka~\cite{Aka:arithmetic}*{Theorem~2}.

  In fact, in his proof, Aka also uses Borel's theorem above, but only as an intermediate step after showing (in \cite{Aka:arithmetic}*{Section~5}) that \emph{commensurable} \(S\)-arithmetic groups have the same covolume if they are profinitely isomorphic.  This naturally led us to the question whether one can show the profiniteness of volume of higher rank \(S\)-arithmetic groups in general.

  Let us also point out that R.\,Spitler showed in his thesis~\cite{Spitler:thesis} that if a general finitely generated residually finite group \(\Lambda\) is profinitely isomorphic to a higher rank \(S\)-arithmetic group \(\Gamma\), then \(\Lambda\) embeds in a \(T\)-arithmetic group \(\Delta\). If \(\Delta\) has moreover CSP, then \(\widehat{\Gamma} \cong \widehat{\Delta}\).  It is moreover a decades old question, advertised by Platonov and Rapinchuk in~\cite{Platonov-Rapinchuk:algebraic-groups}*{Problem on p.\,424}, whether \(S\)-arithmetic groups with CSP have proper \emph{Grothendieck subgroups}.  In our setting, this just asks whether the inclusion \(\Lambda \le \Delta\) can be proper.  To sum up, assuming CSP* and that the Platonov--Rapinchuk problem has a negative solution, we can conclude that a higher rank \(S\)-arithmetic group \(\Gamma\) is \emph{absolutely profinitely almost rigid}: up to isomorphism, only finitely many finitely generated residually finite groups \(\Lambda\) satisfy \(\widehat{\Gamma} \cong \widehat{\Lambda}\).

  Finally coming back to Question~\ref{question:volume-3-manifolds}, we should mention that Yi Liu~\cite{Liu}*{Theorem~1.1} has recently shown that finite volume hyperbolic 3-manifold groups are profinitely almost rigid among finitely generated 3-manifold groups. However, he writes explicitly~\cite{Liu}*{p.\,743} that he does not know whether all the finitely many hyperbolic 3-manifold groups with the same profinite completion have equal volume.
\end{remark}

\appendix

\section{$S$-adelic superrigidity}
\label{appendix:s-adelic-superrigidity}

We briefly present the extension of adelic superrigidity as proven in \cite{KammeyerKionke} to the $S$-arithmetic case.

\begin{theorem}\label{thm:adelic-rigidity-s-arithmetic}
Let $k$ and $l$ be two algebraic number fields and let $S \subseteq V(k)$ and $T \subseteq V(l)$ be two finite subsets containing all the archimedean places. Assume that $S$ does not contain a finite place at which $\mathbf{G}$ is anisotropic.
Let $\mathbf{G}$ and $\mathbf{H}$ be simply-connected absolutely almost simple groups over $k$ and $l$ and assume that $\mathbf{G}$ is $S$-algebraically superrigid.

Let $\Gamma \subseteq \mathbf{G}(k)$ be an $S$-arithmetic subgroup.  Suppose there is a homomorphism $\phi \colon \Gamma \to \mathbf{H}(\mathbb{A}_{l,T})$ such that $\overline{\phi(\Gamma)}$ is compact and has non-empty interior. 
Then there are, uniquely determined,
\begin{itemize}
\item an injective map $w \colon V(l) \setminus T \to V(k) \setminus S$,
\item isomorphisms of valued fields $j_v \colon l_v \to k_{w(v)}$ for all $v \in V(l) \setminus T$ which induce an injection $j \colon \mathbb{A}_{l,T} \to \mathbb{A}_{k,S}$ of topological rings,
\item a homomorphism of group schemes over $j$
\[
	\eta\colon \mathbf{G}\times_k \mathbb{A}_{k,S} \to \mathbf{H}\times_l \mathbb{A}_{l,T},
\]
\item a group homomorphism $\nu\colon \Gamma \to Z(\mathbf{H})(\mathbb{A}_{l,T})$ 
\end{itemize}
such that $\phi(\gamma) = \nu(\gamma)\eta(\gamma)$ for all $\gamma \in \Gamma$.

Moreover, for every prime number $p$
\[
	\sum_{\substack{v \in V(l)\setminus T\\v\mid p}} [l_v:\mathbb{Q}_p] \leq \sum_{\substack{w \in V(k)\setminus S\\w\mid p}} [k_w:\mathbb{Q}_p] 
\]
and if equality occurs for every prime, then $w$ is a bijection and $j$ is an isomorphism.
\end{theorem}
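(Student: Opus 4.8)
The plan is to transplant the proof of the adelic superrigidity theorem for arithmetic groups, \cite{KammeyerKionke}*{Theorem~3.2}, isolating the two ingredients whose arithmetic-case form must be upgraded. The first is strong approximation: since $\mathbf{G}$ is simply connected, $S$ contains all archimedean places, and (by hypothesis) $S$ contains no finite place at which $\mathbf{G}$ is anisotropic, the group $\prod_{v \in S}\mathbf{G}(k_v)$ is non-compact outside the degenerate case in which $\Gamma$ is finite, so $\mathbf{G}(k)$ is dense in $\mathbf{G}(\mathbb{A}_{k,S})$ by Kneser--Platonov, and likewise for $\mathbf{H}$; this is the only input about places outside $S$ (resp.\ $T$) that the argument uses, and it is available under our hypotheses. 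The second is that algebraic superrigidity may be tested over $\mathbb{C}$: every completion $l_v$ with $v\notin T$ is a characteristic zero local field, hence embeds into $\mathbb{C}$, so $S$-algebraic superrigidity of $\mathbf{G}$ applies to homomorphisms with values in the $l_v$-points of an absolutely almost simple $l_v$-group. One first records that $\overline{\phi(\Gamma)}$, being a closed subgroup with non-empty interior, is compact open and therefore profinite, so $\phi$ extends to a continuous surjection $\widehat{\phi}\colon\widehat{\Gamma}\to\overline{\phi(\Gamma)}$; the objects to analyze are the place-components $\phi_v\colon\Gamma\to\mathbf{H}(l_v)$, for $v\in V(l)\setminus T$, all of which are finite places since $T\supseteq V_\infty(l)$.

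I would then carry out the local analysis place by place. Fix $v\in V(l)\setminus T$. Since $\overline{\phi(\Gamma)}$ is open in the restricted product $\mathbf{H}(\mathbb{A}_{l,T})$, its projection to $\mathbf{H}(l_v)$ is a compact open subgroup; in particular the Zariski closure $\mathbf{M}_v$ of $\phi_v(\Gamma)$ contains a $p$-adic neighbourhood of the identity in $\mathbf{H}(l_v)$, hence has full dimension, hence equals the connected group $\mathbf{H}$. Thus $\phi_v\colon\Gamma\to\mathbf{H}(l_v)$ has Zariski dense image, and $S$-algebraic superrigidity (verified over $\mathbb{C}$, then descended to $l_v$) produces a unique field embedding $\sigma_v\colon k\hookrightarrow l_v$, a unique $l_v$-epimorphism $\eta_v\colon{}^{\sigma_v}\mathbf{G}\to\mathbf{H}$, and a unique $\nu_v\colon\Gamma\to Z(\mathbf{H})(l_v)$ with $\phi_v=\nu_v\eta_v$. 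Since $\mathbf{G}$ is absolutely almost simple, the kernel of the non-trivial $\eta_v$ is finite and central, so $\eta_v$ is a central isogeny of simply connected groups, i.e.\ an isomorphism, whence $\dim\mathbf{G}=\dim\mathbf{H}$; moreover $\sigma_v$ identifies $l_v$ with the completion $k_{w(v)}$ at a unique place $w(v)$ of $k$, and I write $j_v\colon l_v\xrightarrow{\ \sim\ }k_{w(v)}$ for the resulting isomorphism of valued fields. The key new point, and the place where the hypothesis on $S$ is used, is that $w(v)\notin S$: if $w(v)$ were a finite place of $S$, then $\mathbf{G}$ would be $k_{w(v)}$-isotropic, $\mathbf{G}(k_{w(v)})$ non-compact, and the image of $\Gamma$ in $\mathbf{G}(k_{w(v)})$ not relatively compact, being a dense or lattice image of an $S$-arithmetic group; since $\nu_v(\Gamma)$ is finite, $\overline{\phi_v(\Gamma)}$ would then be non-compact, contradicting compactness of $\overline{\phi(\Gamma)}$. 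This yields the desired map $w\colon V(l)\setminus T\to V(k)\setminus S$, $v\mapsto w(v)$.

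Next I would assemble the global data. The map $w$ is injective: if $w(v_1)=w(v_2)$, then the projection of the open set $\overline{\phi(\Gamma)}$ to $\mathbf{H}(l_{v_1})\times\mathbf{H}(l_{v_2})$ is open, yet it is contained in a finite central thickening of the graph of the isomorphism $\eta_{v_2}\circ\eta_{v_1}^{-1}$, a closed subgroup of $p$-adic dimension $\dim\mathbf{H}<2\dim\mathbf{H}$, a contradiction. Injectivity of $w$ together with the fact that each $j_v$ carries the ring of integers of $l_v$ onto that of $k_{w(v)}$ permits gluing the $j_v$ into an injective homomorphism of topological rings $j\colon\mathbb{A}_{l,T}\to\mathbb{A}_{k,S}$ and the $\eta_v$ into a homomorphism of group schemes $\eta\colon\mathbf{G}\times_k\mathbb{A}_{k,S}\to\mathbf{H}\times_l\mathbb{A}_{l,T}$ over $j$. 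The $\nu_v$ glue to $\nu\colon\Gamma\to Z(\mathbf{H})(\mathbb{A}_{l,T})$: for all but finitely many $v$ one has $\phi_v(\Gamma)\subseteq\mathbf{H}(\mathcal{O}_{l_v})$ and, the integral models of $\mathbf{G}$ and $\mathbf{H}$ being matched there by $\eta_v$ and $j_v$, also $\nu_v(\Gamma)\subseteq Z(\mathbf{H})(\mathcal{O}_{l_v})$; since $Z(\mathbf{H})$ is abelian, $\nu$ is a homomorphism. Comparing coordinates gives $\phi(\gamma)=\nu(\gamma)\eta(\gamma)$, and uniqueness of $(w,\,j_v,\,\eta,\,\nu)$ follows from the uniqueness clause in $S$-algebraic superrigidity applied at each place.

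Finally, the displayed inequality is almost formal given this structure. Since $j_v$ is an isomorphism of valued fields, $[l_v:\mathbb{Q}_p]=[k_{w(v)}:\mathbb{Q}_p]$ whenever $v\mid p$, and then $w(v)\mid p$; because $w$ is injective, the places $w(v)$ with $v\mid p$ and $v\notin T$ form a subset of the places $w\mid p$ with $w\notin S$, so
\[
\sum_{\substack{v\in V(l)\setminus T\\ v\mid p}}[l_v:\mathbb{Q}_p]
=\sum_{\substack{v\in V(l)\setminus T\\ v\mid p}}[k_{w(v)}:\mathbb{Q}_p]
\le\sum_{\substack{w\in V(k)\setminus S\\ w\mid p}}[k_w:\mathbb{Q}_p].
\]
Equality for a fixed $p$ says exactly that $w$ is surjective onto the $p$-adic places of $k$ outside $S$; equality for every $p$ is then equivalent to $w$ being a bijection, which by the construction of $j$ from the $j_v$ is equivalent to $j$ being an isomorphism of topological rings. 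I expect the main obstacle to be the local step and, within it, the verification that $w(v)\notin S$: this is the one point where the hypothesis that $S$ contains no finite anisotropic place genuinely enters and has no counterpart in the purely arithmetic case, the remaining steps being a careful but routine transcription of \cite{KammeyerKionke}.
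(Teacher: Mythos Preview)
Your proposal is correct and follows essentially the same route as the paper: the authors state explicitly that the proof of \cite{KammeyerKionke}*{Theorem~3.2} carries over almost verbally, the one new point being precisely the verification that $w(v)\notin S$, which they phrase as showing that in \cite{KammeyerKionke}*{Lemma~3.3} the closure $\overline{\phi_v(\Gamma)}$ is compact if and only if $\overline{\Gamma}$ is compact---exactly the compactness argument you give. Your injectivity argument via $p$-adic dimension is a minor variant of the paper's corrected Step~2, where they instead observe that the intersection of the image of $(\eta_{v_1},\eta_{v_2})$ with $\mathbf{H}(l_{v_1})\times\{1\}$ is finite (being the image of $\ker\eta_{v_2}$) and hence not open; both arguments reach the same contradiction.
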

The proof of Theorem 3.2 in \cite{KammeyerKionke} carries over almost verbally. One needs to verify that places not in $T$ are mapped to places not in $S$. To this end, it suffices to see that in \cite[Lemma 3.3]{KammeyerKionke}, the group $\overline{\phi_v(\Gamma)}$ is compact if and only if $\overline{\Gamma}$ is compact. 

The proof of Step 2 in \cite{KammeyerKionke} not correct as stated. We use the opportunity to give a correct argument. Assume that two places $v_1,v_2$ of $L$ are mapped to the same place $w$ of $K$.
Our aim (as in  \cite{KammeyerKionke}) is to show that the map $(\eta_{v_1},\eta_{v_2})\colon \mathbf{G}(K_w)\to \mathbf{H}(L_{v_1})\times\mathbf{H}(L_{v_2})$ has a nowhere dense image, to derive a contradiction. 
Since the image is a subgroup, it suffices to show that it does not contain an open neighbourhood of the identity. 
However, the inverse image of \(\mathbf{H}(L_{v_1})\times \{1\}\) under $(\eta_{v_1},\eta_{v_2})$ is the kernel of $\eta_{v_2}$.
We recall that $\eta_{v_1}$,$\eta_{v_2}$ are central isogenies, i.e.,  the kernel is finite and hence the intersection of the image $(\eta_{v_1},\eta_{v_2})$ with \(\mathbf{H}(L_{v_1})\times \{1\}\) is finite and cannot be open.

\begin{bibdiv}[References]

  \begin{biblist}

\bib{Aka:arithmetic}{article}{
   author={Aka, Menny},
   title={Arithmetic groups with isomorphic finite quotients},
   journal={J. Algebra},
   volume={352},
   date={2012},
   pages={322--340},
   issn={0021-8693},
   review={\MR{2862189}},
 }
 
\bib{Aka:property-t}{article}{
   author={Aka, Menny},
   title={Profinite completions and Kazhdan's property (T)},
   journal={Groups Geom. Dyn.},
   volume={6},
   date={2012},
   number={2},
   pages={221--229},
   issn={1661-7207},
   review={\MR{2914858}},
 }
 
\bib{Borel:bunded-covolume}{article}{
   author={Borel, A.},
   title={On the set of discrete subgroups of bounded covolume in a
   semisimple group},
   journal={Proc. Indian Acad. Sci. Math. Sci.},
   volume={97},
   date={1987},
   number={1-3},
   pages={45--52 (1988)},
   issn={0253-4142},
   review={\MR{0983603}},
}
         
\bib{Chernousov}{article}{
   author={Chernousov, V. I.},
   title={The Hasse principle for groups of type $E_8$},
   language={Russian},
   journal={Dokl. Akad. Nauk SSSR},
   volume={306},
   date={1989},
   number={5},
   pages={1059--1063},
   issn={0002-3264},
   translation={
      journal={Soviet Math. Dokl.},
      volume={39},
      date={1989},
      number={3},
      pages={592--596},
      issn={0197-6788},
   },
   review={\MR{1014762}},
}

\bib{Corlette:archimedean-superrigidity}{article}{
   author={Corlette, Kevin},
   title={Archimedean superrigidity and hyperbolic geometry},
   journal={Ann. of Math. (2)},
   volume={135},
   date={1992},
   number={1},
   pages={165--182},
   issn={0003-486X},
   review={\MR{1147961}},
 }

\bib{GromovSchoen}{article}{
   author={Gromov, Mikhail},
   author={Schoen, Richard},
   title={Harmonic maps into singular spaces and $p$-adic superrigidity for
   lattices in groups of rank one},
   journal={Inst. Hautes \'Etudes Sci. Publ. Math.},
   number={76},
   date={1992},
   pages={165--246},
   issn={0073-8301},
   review={\MR{1215595}},
 }
 
 \bib{KammeyerKionke}{article}{
   author={Kammeyer, Holger},
   author={Kionke, Steffen},
   title={Adelic superrigidity and profinitely solitary lattices},
   journal={Pacific J. Math.},
   volume={313},
   date={2021},
   number={1},
   pages={137--158},
   issn={0030-8730},
   review={\MR{4313430}},
}

         \bib{Kammeyer-Kionke-Raimbault-Sauer}{article}{
   author={Kammeyer, Holger},
   author={Kionke, Steffen},
   author={Raimbault, Jean},
   author={Sauer, Roman},
   title={Profinite invariants of arithmetic groups},
   journal={Forum Math. Sigma},
   volume={8},
   date={2020},
   pages={Paper No. e54, 22},
   review={\MR{4176758}},
 }

 \bib{Kammeyer-Sauer:spinor}{article}{
   author={Kammeyer, Holger},
   author={Sauer, Roman},
   title={$S$-arithmetic spinor groups with the same finite quotients and
   distinct $\ell^2$-cohomology},
   journal={Groups Geom. Dyn.},
   volume={14},
   date={2020},
   number={3},
   pages={857--869},
   issn={1661-7207},
   review={\MR{4167024}},
 }
 
\bib{Klingen:similarities}{book}{
   author={Klingen, Norbert},
   title={Arithmetical similarities},
   series={Oxford Mathematical Monographs},
   note={Prime decomposition and finite group theory;
   Oxford Science Publications},
   publisher={The Clarendon Press, Oxford University Press, New York},
   date={1998},
   pages={x+275},
   isbn={0-19-853598-8},
   review={\MR{1638821}},
}

\bib{Kneser:galois}{article}{
   author={Kneser, Martin},
   title={Galois-Kohomologie halbeinfacher algebraischer Gruppen \"uber
   ${\germ p}$-adischen K\"orpern. II},
   language={German},
   journal={Math. Z.},
   volume={89},
   date={1965},
   pages={250--272},
   issn={0025-5874},
   review={\MR{0188219}},
 }
 
\bib{Kottwitz}{article}{
   author={Kottwitz, Robert E.},
   title={Tamagawa numbers},
   journal={Ann. of Math. (2)},
   volume={127},
   date={1988},
   number={3},
   pages={629--646},
   issn={0003-486X},
   review={\MR{0942522}},
}

\bib{Liu}{article}{
   author={Liu, Yi},
   title={Finite-volume hyperbolic 3-manifolds are almost determined by
   their finite quotient groups},
   journal={Invent. Math.},
   volume={231},
   date={2023},
   number={2},
   pages={741--804},
   issn={0020-9910},
   review={\MR{4542705}},
 }
 
\bib{Lubotzky-Martin:rep-growth}{article}{
   author={Lubotzky, Alexander},
   author={Martin, Benjamin},
   title={Polynomial representation growth and the congruence subgroup
   problem},
   journal={Israel J. Math.},
   volume={144},
   date={2004},
   pages={293--316},
   issn={0021-2172},
   review={\MR{2121543}},
 }

 \bib{Lubotzky-Segal:subgroup-growth}{book}{
   author={Lubotzky, Alexander},
   author={Segal, Dan},
   title={Subgroup growth},
   series={Progress in Mathematics},
   volume={212},
   publisher={Birkh\"auser Verlag, Basel},
   date={2003},
   pages={xxii+453},
   isbn={3-7643-6989-2},
   review={\MR{1978431}},
 }
 
 \bib{Margulis:discrete-subgroups}{book}{
   author={Margulis, G. A.},
   title={Discrete subgroups of semisimple Lie groups},
   series={Ergebnisse der Mathematik und ihrer Grenzgebiete (3)},
   volume={17},
   publisher={Springer-Verlag, Berlin},
   date={1991},
   pages={x+388},
   isbn={3-540-12179-X},
   review={\MR{1090825}},
 }

 \bib{Moore:extensions}{article}{
   author={Moore, Calvin C.},
   title={Extensions and low dimensional cohomology theory of locally
   compact groups. I, II},
   journal={Trans. Amer. Math. Soc.},
   volume={113},
   date={1964},
   pages={40--63; ibid. {\bf 113 (1964), 64--86}},
   issn={0002-9947},
   review={\MR{0171880}},
 }

 \bib{Moore:group-extensions}{article}{
   author={Moore, Calvin C.},
   title={Group extensions and cohomology for locally compact groups. III},
   journal={Trans. Amer. Math. Soc.},
   volume={221},
   date={1976},
   number={1},
   pages={1--33},
   issn={0002-9947},
   review={\MR{0414775}},
}

\bib{Ono:algebraicgroups}{article}{
   author={Ono, Takashi},
   title={On algebraic groups and discontinuous groups},
   journal={Nagoya Math. J.},
   volume={27},
   date={1966},
   pages={279--322},
   issn={0027-7630},
   review={\MR{0199193}},
}

\bib{Platonov-Rapinchuk:algebraic-groups}{book}{
   author={Platonov, Vladimir},
   author={Rapinchuk, Andrei},
   title={Algebraic groups and number theory},
   series={Pure and Applied Mathematics},
   volume={139},
   note={Translated from the 1991 Russian original by Rachel Rowen},
   publisher={Academic Press, Inc., Boston, MA},
   date={1994},
   pages={xii+614},
   isbn={0-12-558180-7},
   review={\MR{1278263}},
 }
 
 \bib{Prasad-Rapinchuk:metaplectic}{article}{
   author={Prasad, Gopal},
   author={Rapinchuk, Andrei S.},
   title={Computation of the metaplectic kernel},
   journal={Inst. Hautes \'Etudes Sci. Publ. Math.},
   number={84},
   date={1996},
   pages={91--187 (1997)},
   issn={0073-8301},
   review={\MR{1441007}},
 }
 
 \bib{Prasad-Rapinchuk:survey}{article}{
   author={Prasad, G.},
   author={Rapinchuk, A. S.},
   title={Developments on the Congruence Subgroup Problem after the Work of Bass, Milnor and Serre}
   book={
   		title={Collected papers of John Milnor. V. Algebra},
		editor={Bass, Hyman},
   		editor={Lam, T. Y.},
   		note={Edited by Hyman Bass and T. Y. Lam},
		publisher={American Mathematical Society, Providence, RI},
		date={2010},
   },
    date={2010},
    pages={307--325},
    review={\MR{2841244}},
}

\bib{Reid:ICM} {article}{
   author={Reid, Alan W.},
   title={Profinite rigidity},
   conference={
      title={Proceedings of the International Congress of
      Mathematicians---Rio de Janeiro 2018. Vol. II. Invited lectures},
   },
   book={
      publisher={World Sci. Publ., Hackensack, NJ},
   },
   isbn={978-981-3272-91-0},
   isbn={978-981-3272-87-3},
   date={2018},
   pages={1193--1216},
   review={\MR{3966805}},
 }
 
\bib{Serre:cohomologie}{article}{
   author={Serre, Jean-Pierre},
   title={Cohomologie des groupes discrets},
   language={French},
   conference={
      title={S\'eminaire Bourbaki, 23\`eme ann\'ee (1970/1971)},
   },
   book={
      series={Lecture Notes in Math.},
      volume={Vol. 244},
      publisher={Springer, Berlin-New York},
   },
   date={1971},
   pages={Exp. No. 399, pp. 337--350},
   review={\MR{0422504}},
 }

 \bib{Spitler:thesis}{thesis}{
author = {R. F. Spitler},
title = {Profinite Completions and Representations of Finitely Generated Groups},
year = {2019},
note = {PhD thesis},
organization = {Purdue University},
review = {\newline \url{https://www.doi.org/10.25394/PGS.9117068.v1}},
}

 \bib{Wigner:algebraic-cohomology}{article}{
   author={Wigner, David},
   title={Algebraic cohomology of topological groups},
   journal={Trans. Amer. Math. Soc.},
   volume={178},
   date={1973},
   pages={83--93},
   issn={0002-9947},
   review={\MR{0338132}},
}
  
\end{biblist}
\end{bibdiv}

\end{document}